\newcolumntype{C}[1]{>{\centering\arraybackslash}m{#1}}
\theoremstyle{plain}
\newtheorem{theorem}{Theorem}{}
\newtheorem{lemma}{Lemma}{}
{}
\newtheorem{corollary}{Corollary}{}
{}
{}
{}
\theoremstyle{definition}
\newtheorem{definition}{Definition}
\theoremstyle{definition}
\newtheorem{example}[theorem]{Example}
\title{\bf Gain distance Laplacian matrices for complex unit gain graphs}
\author{Suliman Khan}
\affil{Department of Mathematics and Physics,\\
University of Campania "Luigi Vanvitelli",\\
Viale Lincoln 5, Caserta, I–81100, Italy \vspace{0.4cm}\\

Department of Econometrics and O.R.,\\
Tilburg University, Tilburg, Netherlands \vspace{0.3cm}\\

 \small{Email: suliman.khan@unicampania.it \\ ~~~~~~~~~suliman5344@gmail.com }}
\date{}
\begin{document}

\maketitle

\vspace{-0.1cm}


\begin{abstract}  \noindent
A complex unit gain graph (or a $\mathbb{T}$-gain graph) $\Theta(\Sigma,\varphi)$ is a graph where the unit complex number is assign by a function $\varphi$ to every oriented edge of $\Sigma$ and assign its inverse to the opposite orientation. In this paper, we define the two gain distance Laplacian matrices $DL^{\max}_{<}(\Theta)$ and $DL^{\min}_{<}(\Theta)$ corresponding to the two gain distance matrices $D^{\max}_{<}(\Theta)$ and $D^{\min}_{<}(\Theta)$ defined for $\mathbb{T}$-gain graphs $\Theta(\Sigma,\varphi)$, for any vertex ordering $(V(\Sigma),<)$. Furthermore, we provide the characterization of singularity and find formulas for the rank of those Laplacian matrices. We also establish two types of characterization for balanced in complex unit gain graphs while using the gain distance Lapalcian matrices. Most of the results are derived by proving them more generally for weighted $\mathbb{T}$-gain graphs.

\end{abstract}
\vspace{0.2cm}
\thanks{{\em Math. Subj. Class.}  05C05, 05C50}\\
\thanks{{\em Keywords}: $\mathbb{T}$-gain graph, gain distances matrices, gain distance Laplacian matrices, singularity, balanced, rank, weighted $\mathbb{T}$-gain graphs}

\section{Introduction}
\indent
In graph theory, a gain graph or a $\mathcal{G}$-gain graph typically refers to a graph where each edge is labeled by a gain. Let $\Sigma=(V,E)$ be an undirected graph with a vertex set $V$ and edge set $E$ and let denote the set of oriented edges in $\Sigma$ by $\overrightarrow{E}(\Sigma)$. The {\it gain graph} define on a group $\mathcal{G}$ is a triple $\Theta=(\Sigma,\mathcal{G},\varphi)$, where $\Sigma$ is the underlying graph, $\mathcal{G}$ is a gain group (an abstract group) and  $\varphi:\overrightarrow{E}(\Sigma)\to \mathcal{G}$ is a map that for every oriented edge of $\Sigma$ we have $\varphi(e_{jk})=\varphi(e_{kj})^{-1}$, known as gain function. Connivently, we write $\Theta=(\Sigma,\varphi)$ for a $\mathcal{G}$-gain graph. Let $\mathbb{T}$ be a {\it circle group}:  $\mathbb{T}=\{ z \in \mathbb{C}:|z|=1 \}$ a subgroup of multiplicative group of non-zero complex numbers. For a particular choice of $\mathbb{T}$ in $\Theta=(\Sigma,\mathcal{G},\varphi)$ a gain graph is said to be {\it complex unit gain graph} or {\it $\mathbb{T}$-gain graph}. The {\it signed graph} is a complex unit gain graph with gains $\{+1,-1\}$ and the undirected graphs be considered a complex unit gain graph with gain $+1$. To study more about complex unit gain graphs we refer the readers to \cite{Reff2012, Belardo2023, Khan2024}.\\

The study of matrices and eigenvalues play a crucial role in graph theory by providing powerful tools for analyzing the structure, connectivity, and properties of graphs, as well as for developing graph algorithms and techniques. Associated to a graph the researchers defined different types of matrices; like adjacency matrix, Laplacian matrix, singless Laplacian matrix, Normalized Lapalcian matrix, distance matrix, incidence matrix etc. The distance matrices is a well-studied class of matrices among them. For a graph $\Sigma=(V,E)$ the distance between any two vertices $v_j, v_k \in V(\Sigma)$ is denoted by $d(v_j,v_k)$ and define to be the shortest path length between $v_j$ and $v_k$. In \cite{Edelberg1976}, Edelberg et al., studied the distnce matrices for trees. The distance matrices for directed graphs were studied by Graham et al., in \cite{Graham1977}. In \cite{Graham1978}, Lovász and Graham derived an expression for the inverse of a distance matrix of a tree. This concept further extended to the weighted trees by Bapt et al., in \cite{Bapat2005}. To read more about distance matrices we recommend the readers to study \cite{Aouchiche2014}; a survey on distances matrices and their spectra by Aouchiche and Hansen.\\

Recently, Hammed et al., introduced the two types of signed distance matrices for signed graphs \cite{Hameed2021}. This work is extended by Samanta et al., to the setting of complex unit gain graphs \cite{Samanta2022}, and they defined the two gain distances matrices $D^{\max}_{<}(\Theta)$ and $D^{\min}_{<}(\Theta)$ for a $\mathbb{T}$-gain graph $\Theta(\Sigma,\varphi)$, for any vertex ordering $(V(\Sigma),<)$. In this paper, we define the two gain distance Laplacian matrices $DL^{\max}_{<}(\Theta)$ and $DL^{\min}_{<}(\Theta)$ corresponding to those two gain distance matrices of $\mathbb{T}$-gain graphs. Furthermore, we provide the characterization of singularity and find formulas for the rank of these matrices. We also establish two types of characterization for balanced in complex unit gain graphs while using the gain distance Lapalcian matrices. Most of the results are derived by proving them more generally for weighted $\mathbb{T}$-gain graphs.

\subsection{Gain distance matrices}
In \cite{Samanta2022}, Samanta and Kannan introduced the concept of gain distance matrices of complex unit gain graphs. Obviously, this was the generalization to the previously defined two concepts: the distance matrices defined for undirected graphs and the signed distance matrices defined for signed graphs.

\begin{definition}\label{def1}
\cite{Samanta2022} Let $\Theta=(\Sigma,\varphi)$ denote a connected complex unit gain graph on $\Sigma$. For $y,z \in V(\Sigma)$, the oriented path form $y$ to $z$ is denoted by $yPz$. Then, the three types of path are defined as follows:\\

(P1)~ $\mathcal{P}(y,z)=\{yPz: ~ \text{where} ~yPz$ represent a shortest path\};\\

(P2)~ $\mathcal{P}^{\max}(y,z)=\{yPz \in \mathcal{P}(y,z): ~ Re(\varphi(yPz))=\max\limits_{y\tilde{P}z \in \mathcal{P}(y,z)}Re(\varphi(y\tilde{P}z))$ \};\\

(P3)~ $\mathcal{P}^{\min}(y,z)=\{yPz \in \mathcal{P}(y,z): ~ Re(\varphi(yPz))=\min\limits_{y\tilde{P}z \in \mathcal{P}(y,z)}Re(\varphi(y\tilde{P}z))$ \};
\end{definition}

Denote the order of a vertex set in $\Sigma$ by $(V(\Sigma,<))$; where the symbol $<$ is the standard ordering of vertices if $v_1<v_2<v_3< \cdots v_n$. For any vertex set ordering $<$, the reverse ordering is denoted by symbol $<_{r}$ and defined as: $v_j<_{r}v_{k}$ if and only if $v_{k}<v_{j}$, for any $j,k$. Next, we define the auxiliary gains.

 \begin{definition}\label{def2}
\cite{Samanta2022} Let $\Theta=(\Sigma,\varphi)$ denote a connected complex unit gain graph with the vertex ordering $(V(\Sigma),<)$. Then, the two types of auxiliary gains are defined:\\

(A1). $\varphi^{\max}_{<}$ is a maximum auxiliary gain defined by a map $\varphi^{\max}_{<}:V(\Sigma)\times V(\Sigma)\rightarrow \mathbb{T}$, satisfying the following two properties.\\
(i) $\varphi^{\max}_{<}(y,y)=0$ for all $y \in V(\Sigma)$;\\
(ii) Whenever $y<z$, $\varphi^{\max}_{<}(y,z)=\varphi(yPz)$, where $yPz$ represent an oriented path from $y$ to $z$ that is $yPz \in \mathcal{P}^{\max}(y,z)$ with $Im(\varphi(yPz))=\max\limits_{y\tilde{P}z \in \mathcal{P}^{\max}(y,z)}Im(\varphi(y\tilde{P}z))$. Moreover, $\varphi^{\max}_{<}(z,y)=\overline{\varphi^{\max}_{<}(y,z)}$.\\

(A2). $\varphi^{\min}_{<}$ is a minimum auxiliary gain defined by a map $\varphi^{\min}_{<}:V(\Sigma)\times V(\Sigma)\rightarrow \mathbb{T}$, satisfying the following two properties.\\
(i) $\varphi^{\min}_{<}(y,y)=0$ for all $y \in V(\Sigma)$;\\
(ii) Whenever $y<z$, $\varphi^{\min}_{<}(y,z)=\varphi(yPz)$, where $yPz$ represent an oriented path from $y$ to $z$ that is $yPz \in \mathcal{P}^{\min}(y,z)$ with $Im(\varphi(yPz))=\min\limits_{y\tilde{P}z \in \mathcal{P}^{\min}(y,z)}Im(\varphi(y\tilde{P}z))$. Moreover, $\varphi^{\min}_{<}(z,y)=\overline{\varphi^{\min}_{<}(y,z)}$.\\

Remember that, for $y<z$, $\varphi^{\min}_{<}(y,z)$ (resp. $\varphi^{\min}_{<}(y,z))$ is the minimum (resp. maximum) gain of $\mathcal{P}(y,z)$ (shortest paths from $y$ to $z$) as regards the lexicographical order of complex number $\mathbb{C}$ (i.e., $u+iv<w+ix$ if either $u<w$ or if $u=w$ then $v<x$). That is,\\

(a) If $y<z$, then $\varphi^{\max}_{<}(y,z)=\max\limits_{yPz \in \mathcal{P}(y,z)}\varphi(yPz)$. Further, $\varphi^{\max}_{<}(z,y)=\overline{\varphi^{\max}_{<}(y,z)}$.\\

(b) If $y<z$, then $\varphi^{\min}_{<}(y,z)=\min\limits_{yPz \in \mathcal{P}(y,z)}\varphi(yPz)$. Further, $\varphi^{\min}_{<}(z,y)=\overline{\varphi^{\min}_{<}(y,z)}$.

\end{definition}

 \begin{definition}\label{def3}
 (\textbf{Gain distances} \cite{Samanta2022}).\\
 Let $\Theta=(\Sigma,\varphi)$ denote a connected complex unit gain graph with the vertex ordering $(V(\Sigma),<)$. Then, for $y,z \in V(\Sigma)$, two gain distances are defined by follows: \\

(i) $d^{\max}_{<}(y,z)=\varphi^{\max}_{<}(y,z)d(y,z)$;\\

(ii) $d^{\min}_{<}(y,z)=\varphi^{\min}_{<}(y,z)d(y,z)$;\\

where, $d(y,z)$ is the distance from vertex $y$ to vertex $z$.
\end{definition}

\begin{definition}\label{def4}
 (\textbf{Gain distance matrices} \cite{Samanta2022}).\\
 Let $\Theta=(\Sigma,\varphi)$ denote a connected complex unit gain graph with the vertex ordering $(V(\Sigma),<)$. Consider, the vertex set of $\Sigma$ is $V(\Sigma)=\{v_1, \cdots, v_n\}$, then the two gain distance matrices associated with a vertex ordering $(V(\Sigma),<)$ are defined by follows: \\

(i) $D^{\max}_{<}(\Theta)=d^{\max}_{<}(v_s,v_t)$;\\

(ii) $D^{\min}_{<}(\Theta)=d^{\min}_{<}(v_s,v_t)$.\\

Here, $d^{\max}_{<}(v_s,v_t)$ and $d^{\min}_{<}(v_s,v_t)$ are the $(s,t)$-th entries of $D^{\max}_{<}(\Theta)$ and $D^{\min}_{<}(\Theta)$, respectively.
\end{definition}

The following example illustrate the above definitions.
\begin{example}
Let $\Theta=(\Sigma,\varphi)$ denote a connected complex unit gain graph with the standard vertex ordering $(V(\Sigma),<)$, as shown in Figure \ref{Fig1}. Then $D^{\max}_{<}(\Theta)$ and $D^{\min}_{<}(\Theta)$ for a graph $\Theta=(\Sigma,\varphi)$ with the given gains defined in Figure \ref{Fig1}, are presented in the following way, respectively.

$$D^{\max}_{<}(\Theta)=\left(
              \begin{array}{ccccc}
                0 & 1 & 2e^{i\pi/4} & 1 & e^{i\pi/4}\\
                1 & 0 & e^{i\pi/4} & 2 & 2e^{i\pi/4}\\
                2e^{-i\pi/4} & e^{-i\pi/4} & 0 & e^{i\pi/4} & 3\\
                1 & 2 & e^{-i\pi/4} & 0 & 2e^{i\pi/4}\\
                e^{-i\pi/4} & 2e^{-i\pi/4}& 3 & 2e^{-i\pi/4} & 0\\
              \end{array}
            \right).
$$
\\
Now, let $<_r$ be the reverse odering of the given standard ordering $<$, then we have\\

$$D^{\max}_{<_r}(\Theta)=\left(
              \begin{array}{ccccc}
                0 & 1 & 2e^{-i\pi/4} & 1 & e^{i\pi/4}\\
                1 & 0 & e^{i\pi/4} & 2 & 2e^{i\pi/4}\\
                2e^{i\pi/4} & e^{-i\pi/4} & 0 & e^{i\pi/4} & 3\\
                1 & 2 & e^{-i\pi/4} & 0 & 2e^{i\pi/4}\\
                e^{-i\pi/4} & 2e^{-i\pi/4}& 3 & 2e^{-i\pi/4} & 0\\
              \end{array}
            \right).
$$
\\

Since $D^{\max}_{<}(\Theta) \neq D^{\max}_{<_r}(\Theta)$. This implies that, $\text{spec}(D^{\max}_{<}(\Theta)) \neq \text{spec}(D^{\max}_{<_r}(\Theta))$. The same situation hold for the minimum case, i.e., $D^{\min}_{<}(\Theta) \neq D^{\min}_{<_r}(\Theta)$.
\end{example}
\begin{figure}[htbp!]
\centering
  \includegraphics[width=7cm]{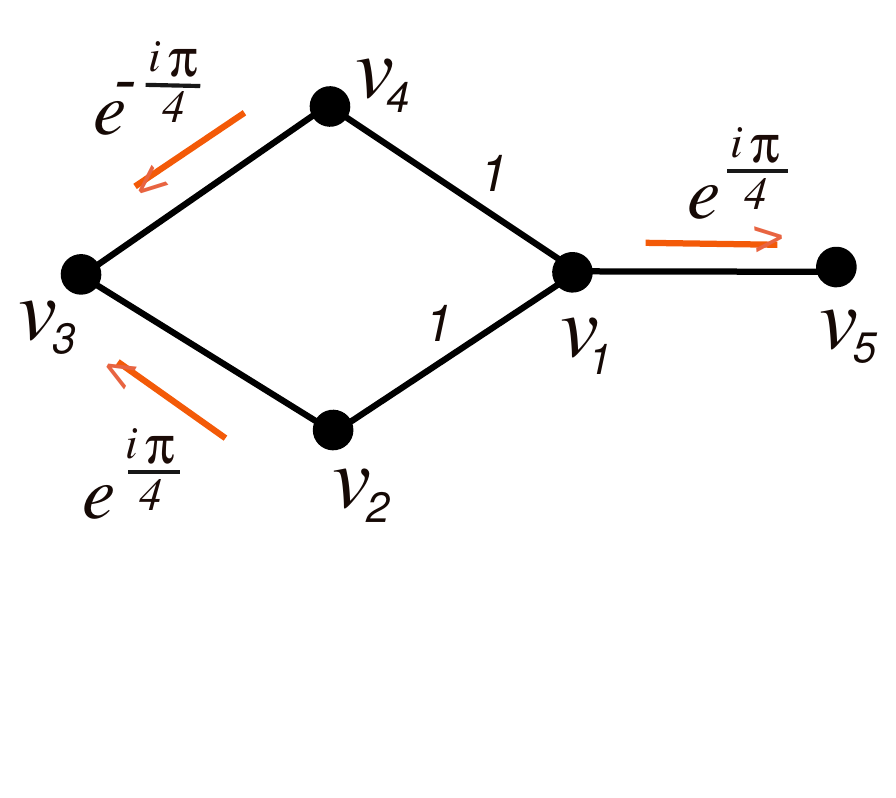}\\
  \caption{Complex unit gain graph $\Theta=(\Sigma,\varphi)$ with $V(\Sigma)=\{v_1, v_2, \cdots, v_5\}$.}\label{Fig1}
\end{figure}

\begin{definition}\label{def4a}
 (\textbf{Ordering independent} \cite{Samanta2022}).\\
 Let $\Theta=(\Sigma,\varphi)$ denote a complex unit gain graph with the standard vertex ordering $(V(\Sigma),<)$. Then $\Theta=(\Sigma,\varphi)$ is ordering independent (vertex ordering independent), if $D^{\max}_{<}(\Theta)=D^{\max}_{<_r}(\Theta)$ and $D^{\min}_{<}(\Theta)=D^{\min}_{<_r}(\Theta)$. If this is the case, then we define by $D^{\max}(\Theta):=D^{\max}_{<}(\Theta)=D^{\max}_{<_r}(\Theta)$ and $D^{\min}(\Theta):=D^{\min}_{<}(\Theta)=D^{\min}_{<_r}(\Theta)$.
\end{definition}

\begin{definition}\label{def4b}
 (\textbf{Distance compatible} \cite{Samanta2022}).\\
 Let $\Theta=(\Sigma,\varphi)$ denote a complex unit gain graph with the standard vertex ordering $(V(\Sigma),<)$. Then $\Theta=(\Sigma,\varphi)$ is said to be gain distance compatible or briefly compatible, if $D^{\max}_{<}(\Theta)=D^{\min}_{<}(\Theta)$. If this is the case, then we define by $D(\Theta):=D^{\max}_{<}(\Theta)=D^{\min}_{<}(\Theta)$.
\end{definition}

The two complete $\mathbb{T}$-gain graphs obtained by the two distance matrices $^{\max}_{<}(\Theta)$ and $^{\min}_{<}(\Theta)$ are given in the following definition:

\begin{definition}\label{def4c}
 \cite{Samanta2022}): Let $\Theta=(\Sigma,\varphi)$ denote a complex unit gain graph with the standard vertex ordering $(V(\Sigma),<)$. Then the complete graph corresponding to  $D^{\max}_{<}(\Theta)$ is denoted by $K^{D^{\max}_{<}}(\Theta)$ and defined as: for any $v_j,v_k \in V(\Sigma)$, we have $\varphi(v_j,v_k)=\varphi^{\max}_{<}(v_j,v_k)$. Similarly, the complete graph corresponding to  $D^{\min}_{<}(\Theta)$ is denoted by $K^{D^{\min}_{<}}(\Theta)$, for which we have $\varphi(v_j,v_k)=\varphi^{\min}_{<}(v_j,v_k)$.
\end{definition}
\ \

\subsection{Gain distance Laplacian matrices}

In \cite{Aouchiche2013}, Aouchiche and Hansen introduced the concept of distance Laplacian matrices for graphs. In this study, we generalize their results to the setting of complex unit gain graph and define two gain distance Laplacian matrices for a complex unit gain graph (or a $\mathbb{T}$-gain graph).\\
The transmission of a vertex $x$ is denoted by $tr(x)$, which is the sum of all the distances of $x$ to any other vertex of a graph. Equivalently, $tr(x)$ is defined as: $tr(x)=\sum_{x' \in V(\Sigma)}d(x,x')$. Let $\Sigma$ be a graph of order $n$, then the transmission matrix of $\Sigma$ is a $n \times n$ diagonal matrix, denoted by $Tr(\Sigma)$ and defined as:

$$Tr(\Sigma)=\left(
              \begin{array}{ccccc}
                tr(x_1) & 0 & \cdots & \cdots & 0\\
                0 & tr(x_2) & \cdots & \cdots & 0 \\
                \cdots & \cdots & \cdots & \cdots & \cdots \\
                \cdots & \cdots & \cdots & \cdots & \cdots \\
                0 & 0 & \cdots & \cdots & tr(x_n) \\
              \end{array}
            \right).
$$
\\

\begin{definition}\label{def4}
 (\textbf{Gain distance Laplacian matrices}).\\
 Let $\Theta=(\Sigma,\varphi)$ denote a connected complex unit gain graph with the vertex ordering $(V(\Sigma),<)$. Let $D^{\max}_{<}(\Theta)$ and $D^{\min}_{<}(\Theta)$ be the two gain distance matrices associated with a vertex ordering $(V(\Sigma),<)$ and $Tr(\Sigma)$ be the transmission matrix of $\Sigma$, then we define the two distance Laplacian matrices as follows: \\

(i) $DL^{\max}_{<}(\Theta)=Tr(\Sigma)-D^{\max}_{<}(\Theta)$;\\

(ii) $DL^{\min}_{<}(\Theta)=Tr(\Sigma)-D^{\min}_{<}(\Theta)$;\\

 Where, $\Theta$ is said to be distance compatible whenever, $DL(\Theta):=DL^{\max}_{<}(\Theta)=DL^{\min}_{<}(\Theta)$.\\

For $<_r$ the two distance Laplacian matrices are defined as follows: \\

(i) $DL^{\max}_{<_r}(\Theta)=Tr(\Sigma)-D^{\max}_{<_r}(\Theta)$;\\

(ii) $DL^{\min}_{<_r}(\Theta)=Tr(\Sigma)-D^{\min}_{<_r}(\Theta)$;\\
\end{definition}

The following example illustrate the above definitions.
\begin{example}
Let $\Theta=(\Sigma,\varphi)$ denote a connected complex unit gain graph with the standard vertex ordering $(V(\Sigma),<)$, as shown in Figure \ref{Fig1}. Then $DL^{\max}_{<}(\Theta)$ and $DL^{\min}_{<}(\Theta)$ for a graph $\Theta=(\Sigma,\varphi)$ with the given gains defined in Figure \ref{Fig1}, are presented in the following way, respectively.

$$DL^{\max}_{<}(\Theta)=\left(
              \begin{array}{ccccc}
                5 & -1 & -2e^{i\pi/4} & -1 & -e^{i\pi/4}\\
                -1 & 6 & -e^{i\pi/4} & -2 & -2e^{i\pi/4}\\
                -2e^{-i\pi/4} &- e^{-i\pi/4} & 7 & -e^{i\pi/4} & -3\\
                -1 & -2 & -e^{-i\pi/4} & 6 & -2e^{i\pi/4}\\
                -e^{-i\pi/4} & -2e^{-i\pi/4}& -3 & -2e^{-i\pi/4} & 8\\
              \end{array}
            \right).
$$
\\
For $<_r$, we have\\

$$DL^{\max}_{<_r}(\Theta)=\left(
              \begin{array}{ccccc}
                5 & -1 & -2e^{-i\pi/4} & -1 & -e^{i\pi/4}\\
                -1 & 6 & -e^{i\pi/4} & -2 & -2e^{i\pi/4}\\
                -2e^{i\pi/4} &-e^{-i\pi/4} & 7 & -e^{i\pi/4} & -3\\
                -1 & -2 & -e^{-i\pi/4} & 6 & -2e^{i\pi/4}\\
                -e^{-i\pi/4} & -2e^{-i\pi/4}& -3 & -2e^{-i\pi/4} & 8\\
              \end{array}
            \right).
$$
\\

It is easy to find all cases.\\
\end{example}

\section{Weighted $\mathbb{T}$-gain graphs}
In this section, we focus on the Laplacian matrix of weighted (positively weighted) $\mathbb{T}$-gain graphs. The Laplacian matrices of Weighted $\mathbb{T}$-gain graphs generalize notions for the Laplacian matrices of $\mathbb{T}$-gain graphs, Laplacian matrices of mixed graphs (where it is called the Hermitian Laplacian matrices), Laplacian matrices of signed graphs and that of undirected graphs. To observe balance in $\mathbb{T}$-gain graphs by studying $\mathbb{T}$-gain distance Lapalcain matrices, the Laplacian matrix of weighted $\mathbb{T}$-gain graphs is crucial. The main focus of this section is to study the three important results about those matrices, which are: the standard expression of the Laplacian matrix in term of incidence matrix; the expression for the Laplacian determinant; and the characterization of singularity. \\

\begin{definition}\label{def5}
 (\textbf{A weighted $\mathbb{T}$-gain graph}).\\
 A weighted or a positively weighted $\mathbb{T}$-gain graph $(\Theta,w)$ (or simply $\Theta_w$) is comprise by a $\mathbb{T}$-gain graph $\Theta=(\Sigma,\varphi)$, a positively weighted function $w$, (where, $w:E(\Sigma)\rightarrow \mathbb{R}^{+}$ and $E(\Sigma)$ is the undirected set of edges in $\Sigma$), and a weighted gain function $\varphi^w:\vec{E}(\Sigma)\rightarrow \mathbb{C}$, which is defined by:\\

 $$\varphi^w(\vec{e}_{j,k})=\varphi(\vec{e}_{j,k})w(e_{j,k}), ~~~ \text{for} ~\vec{e}_{j,k} \in \vec{E}(\Sigma),$$
 here, $\vec{e}_{j,k}$ represent an oriented edge from a vertex $v_j$ to a vertex $v_k$, and its opposite orientation is represented by $\vec{e}_{k,j}$. Note that if it is  an undirected edge from any vertex $v_j$ to $v_k$, then we denote it by simply $e_{j,k}$. \\

 The {\it adjacency matrix} of a weighted $\mathbb{T}$-gain graph is a square matrix with $|V(\Sigma)|=n$, defined by $A(\Theta,w)=(a_{jk})$, where

 $$a_{jk}=\left \{ \begin{array}{rcl}
\varphi^w(\vec{e}_{j,k}) & \mbox{if $e_{j,k} \in E(\Sigma)$}, \\
0 & \mbox{otherwise.}
\end{array} \right.$$

Since $A(\Theta,w)$ is Hermitian, therefore $A(\Theta,w)$ has real eigenvalues. Here, by the spectrum of $\Theta_w$, we mean the spectrum of $A(\Theta_w)$.
\end{definition}

\begin{definition}\label{def6}
 (\textbf{A weighted Lapalacian matrix $L(\Theta,w)$}).\\
 Let $(\Theta,w)$ be a weighted $\mathbb{T}$-gain graph, then the {\it weighted Laplacian matrix} is defined by $L(\Theta,w)=D(\Theta,w)-A(\Theta,w)$, where $D(\Theta,w)$ is the diagonal matrix defined by diag$(\Sigma_{e:v_j\sim e}w(e))$, and known as the {\it weighted degree matrix} of $(\Theta,w)$.
\end{definition}

Let $(\mathfrak{G},\varphi)$ be a gain graph, then the incidence matrix for $(\mathfrak{G},\varphi)$ is defined by Hameed and Ramakrishnan in \cite{Hameed2024}. Here we only take it for a particular choice where $(\mathfrak{G},\varphi)$ be a $\mathbb{T}$-gain graph.\\
Let $\vec{e}=\vec{v_jv_k}$ be the oriented edge, where the vertex $v_j$ considered to be the tail of $\vec{e}$, which is denoted by $\mathcal{\check{T}}(\vec{e})$, while the vertex $v_k$ is taken to be the head of $\vec{e}$, and denoted by $\mathcal{\hat{H}}(\vec{e})$.\\
 The (oriented) incidence matrix for a $\mathbb{T}$-gain graph $\Theta=(\Sigma,\varphi)$ is a $|V|\times |E|$ matrix defined by $H=H(\Theta)=h_{ve}$ and given by

 $$h_{ve}=\left \{ \begin{array}{rcl}
1 & \mbox{if $\mathcal{\check{T}}(\vec{e})=v$}, \\
-(\varphi(\vec{e}))^{-1} & \mbox{if $\mathcal{\hat{H}}(\vec{e})=v$}, \\
0 & \mbox{otherwise.}
\end{array} \right.$$

BY keeping the same orientation of edges as defined earlier. Next, we define the (oriented) incidence matrix of a weighted $\mathbb{T}$-gain graph. Since here we focus on the positively weighted graphs whose weighted function are positive real numbers so they always be appear in a square root form i.e., $\sqrt{w(e)}$.

\begin{definition}\label{def7}
 (\textbf{A weighted incidence matrix $H(\Theta,w)$}).\\
 Let $(\Theta,w)$ be a weighted $\mathbb{T}$-gain graph, then the (oriented) {\it weighted incidence matrix} of a $\mathbb{T}$-gain graph is defined by $H(\Theta,w)=(h_{ve})$ where

 $$h_{ve}=\left \{ \begin{array}{rcl}
\sqrt{w(e)} & \mbox{if $\mathcal{\check{T}}(\vec{e})=v$}, \\
-(\varphi(\vec{e}))^{-1}\sqrt{w(e)} & \mbox{if $\mathcal{\hat{H}}(\vec{e})=v$}, \\
0 & \mbox{otherwise.}
\end{array} \right.$$
\end{definition}

Let denote the transpose of $H(\Theta,w)=h_{ve}$ by $H^T(\Theta,w)=h'_{ev}$, then $h'_{ev}=h_{ve}$. Next, we provide the generalization of the standard formula which connect the Lapalacian and incidence matrices.

\begin{theorem}\label{Theorem1}
Let $(\Theta,w)$ be a weighted $\mathbb{T}$-gain graph, then $L(\Theta,w)=H(\Theta,w)H^T(\Theta,w)$.
\end{theorem}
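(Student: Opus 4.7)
The plan is a direct entrywise comparison of the two $n\times n$ matrices, relying only on the definitions of $H(\Theta,w)$, $A(\Theta,w)$, $D(\Theta,w)$ together with the fundamental identity $\overline{\varphi(\vec e)}=\varphi(\vec e)^{-1}$, which holds because $\varphi(\vec e)\in\mathbb{T}$. For the claimed equality to be consistent with $L(\Theta,w)$ being Hermitian, the symbol $H^{T}$ should be read as the conjugate transpose (an ordinary transpose would produce $\varphi(\vec e)^{-2}$ on the diagonal and could not recover the real weighted degrees). With this convention the argument splits naturally into diagonal and off-diagonal cases, each handled by a short case analysis on whether a given vertex is the head or the tail of an incident edge.

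For the diagonal entry at a vertex $v$, since $h_{ve}$ vanishes unless $v$ is incident with $e$, I would write
$$(HH^{T})_{vv}=\sum_{e\in E(\Sigma)}|h_{ve}|^{2}.$$
If $v=\mathcal{\check{T}}(\vec e)$ the contribution is $w(e)$; if $v=\mathcal{\hat{H}}(\vec e)$ the contribution is $|\varphi(\vec e)^{-1}|^{2}w(e)=w(e)$ as well, using $|\varphi(\vec e)|=1$. Summing over edges incident to $v$ yields $\sum_{e:v\sim e}w(e)$, which is exactly the $(v,v)$-entry of the weighted degree matrix $D(\Theta,w)$.

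For an off-diagonal entry at distinct $v,u$, only an edge joining $v$ and $u$ contributes to $(HH^{T})_{vu}=\sum_{e}h_{ve}\,\overline{h_{ue}}$. Orienting such an edge as $\vec{e}_{vu}$, one computes $h_{ve}=\sqrt{w(e)}$ and $\overline{h_{ue}}=-\overline{\varphi(\vec{e}_{vu})^{-1}}\sqrt{w(e)}=-\varphi(\vec{e}_{vu})\sqrt{w(e)}$, so the product is $-\varphi(\vec{e}_{vu})w(e)=-\varphi^{w}(\vec{e}_{vu})=-A(\Theta,w)_{vu}$. The opposite orientation gives the same value via $\varphi(\vec{e}_{uv})=\varphi(\vec{e}_{vu})^{-1}$, and non-adjacent pairs contribute zero. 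Combining with the diagonal computation yields $HH^{T}=D(\Theta,w)-A(\Theta,w)=L(\Theta,w)$. The only real obstacle is notational bookkeeping: consistently tracking head versus tail and applying $\overline{\varphi^{-1}}=\varphi$ at the right moment so that the correct adjacency entry (rather than its inverse) appears off-diagonal. Structurally the statement is the classical incidence-Laplacian identity from the signed/undirected setting, transported to weighted $\mathbb{T}$-gain graphs, so no new idea is required beyond the conjugate-transpose reading of $H^{T}$.
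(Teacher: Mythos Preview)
Your proof is correct and follows the same entrywise strategy as the paper: split into diagonal and off-diagonal cases and analyze head/tail incidences. Your explicit insistence that $H^{T}$ be read as the conjugate transpose is well placed --- the paper's own definition of $H^{T}$ just before the theorem is stated as an ordinary transpose, but its subsequent computations (e.g.\ the displayed $H^{T}(\Theta,w)$ for the cycle in Theorem~\ref{Theorem2}) in fact use conjugation, so you have identified and resolved an internal inconsistency in the paper rather than introduced one.
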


\begin{proof}
Let $V(\Theta)=\{ v_1,v_2, \cdots, v_n\}$ and $\vec{E}(\Theta)=\{ \vec{e}_1, \vec{e}_2, \cdots, \vec{e}_m \}$ be the vertex and edge sets of $(\Theta,w)$, respectively. Let denoting $H(\Theta,w)$ by $(h_{v_j\vec{e}_k})$ and $H^T(\Theta,w)$ by $(h'_{\vec{e}_jv_k})$. Let the $j^{th}$ row vector of $H(\Theta,w)$ be $(h_{v_j\vec{e}_1},h_{v_j\vec{e}_2}, \cdots, h_{v_j\vec{e}_m})$ and the $k^{th}$ column of $H^T(\Theta,w)$ be $(h'_{\vec{e}_1v_k},h'_{\vec{e}_2v_k}, \cdots, h'_{\vec{e}_mv_k})^T$.\\

Now the $(j,k)^{th}$ entry of $HH^T$ is $\sum_{i=1}^{m}h_{v_j\vec{e}_i}h'_{\vec{e}_iv_k}$.\\

For $j=k$, $h_{v_j\vec{e}_i}h'_{\vec{e}_iv_k} \neq 0$ if and only if $\vec{e}_i$ is incident to $v_j$. Now, if $\mathcal{\check{T}}(\vec{e}_i)=v_j$, then $h_{v_j\vec{e}_i}=\sqrt{w(\vec{e}_i)}$ in $H(\Theta,w)$ and for $H^T(\Theta,w)$, we have the similar expression such as $h'_{\vec{e}_iv_k}=\sqrt{w(\vec{e}_i)}$. Thus, $h_{v_j\vec{e}_i}h'_{\vec{e}_iv_k}=\pm (\sqrt{w(\vec{e}_i)})^2=w(e_i)$ and hence the diagonal entry of $H(\Theta,w)H^T(\Theta,w)$ is equal to $\sum_{e:v_j \sim e}w(e)$.\\

For $j \neq k$, $h_{v_j\vec{e}_i}h'_{\vec{e}_iv_k} \neq 0$ if and only if an edge $\vec{e}_i$ joining $v_j$ and $v_k$. If $\vec{e}_i=\overrightarrow{v_jv_k}$, then

 $$h_{v_j\vec{e}_i}h'_{\vec{e}_iv_k}=\sqrt{w(\vec{e_i})}(-\sqrt{w(\vec{e_i})}((\varphi(e_i))^{-1})^{-1})=-w(e_i) \varphi(e_i).$$

If $\vec{e}_i=\overrightarrow{v_kv_j}$, then we have

$$h_{v_j\vec{e}_i}h'_{\vec{e}_iv_k}=\sqrt{w(\vec{e_i})}(-\sqrt{w(\vec{e_i})}(\varphi(e_i))^{-1})=-w(e_i)(\varphi(e_i))^{-1}.$$

Clearly, for all possible cases the $(j,k)^{th}$ entry in $H(\Theta,w)H^T(\Theta,w)$ coincides with the $(j,k)^{th}$ entry of $L(\Theta,w)$, hence this complete the proof.
\end{proof}

\begin{lemma}\label{Lemma1}
Let $(\Theta,w)$ be a weighted $\mathbb{T}$-gain tree, then $\mathrm{det}(L(\Theta,w))=0$.
\end{lemma}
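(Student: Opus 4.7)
The plan is to leverage the factorization $L(\Theta,w) = H(\Theta,w) H^T(\Theta,w)$ provided by Theorem 1 and then argue by a pure dimension count. Since the underlying graph $\Sigma$ is a tree on $n:=|V(\Sigma)|$ vertices, it has exactly $|E(\Sigma)| = n-1$ edges, so by construction the weighted incidence matrix $H(\Theta,w)$ has shape $n \times (n-1)$.

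Using $\mathrm{rank}(AB) \leq \min(\mathrm{rank}(A), \mathrm{rank}(B))$, we immediately get $\mathrm{rank}(L(\Theta,w)) = \mathrm{rank}(H(\Theta,w) H^T(\Theta,w)) \leq n-1$, so the $n \times n$ matrix $L(\Theta,w)$ is rank-deficient and $\det(L(\Theta,w)) = 0$. Equivalently, the Cauchy--Binet formula expresses $\det(HH^T)$ as a sum over $n$-element subsets of the columns of $H$, but $H$ has only $n-1$ columns, so the sum is empty.

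No serious obstacle arises once Theorem 1 is invoked; the conclusion is essentially a consequence of the shape of $H(\Theta,w)$ combined with the tree identity $|E|=|V|-1$. As a conceptual preview of the role that balance will play in the later characterization sections, one could alternatively exhibit an explicit non-trivial element of $\ker L(\Theta,w)$: since a tree has no cycles, every $\mathbb{T}$-gain function on it is vacuously balanced, and fixing a root $v_1$ and defining $\theta(v) \in \mathbb{T}$ to be the gain of the unique $v_1$--$v$ path yields a switching function that trivializes the gains. A short verification then shows that the vector $(\theta(v_1),\ldots,\theta(v_n))^T$ lies in $\ker L(\Theta,w)$, recovering the same conclusion from a complementary viewpoint.
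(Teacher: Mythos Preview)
Your proof is correct and follows essentially the same approach as the paper: both invoke Theorem~\ref{Theorem1} to write $L(\Theta,w)=H(\Theta,w)H^T(\Theta,w)$, observe that a tree on $n$ vertices has $n-1$ edges so that $H(\Theta,w)$ is $n\times(n-1)$, and conclude by a rank bound that $\det(L(\Theta,w))=0$. The additional remarks you make (the Cauchy--Binet viewpoint and the explicit kernel vector via switching) go beyond what the paper records but are consistent with its later development.
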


\begin{proof}
Let $(\Theta,w)$ be a weighted $\mathbb{T}$-gain tree of order $n$. Then, $(\Theta,w)$ consist of $n-1$ edges and thus $H(\Theta,w)$ is a matrix with order $n \times (n-1)$. Since $L(\Theta,w)=H(\Theta,w)H^T(\Theta,w)$. Therefore, the rank of $L(\Theta,w)$ is less than $n$, which implies $\det(L(\Theta,w))=0$.
\end{proof}

\begin{theorem}\label{Theorem2}
Let $(\Theta,w)$ be a weighted $\mathbb{T}$-gain cycle $\overrightarrow{C_n}$ of order $n$, then

 $$\det(L(\Theta,w))=\prod_{\vec{e}\in E(\overrightarrow{C_n})}w(\vec{e})\bigg(2(1-Re(\varphi(\overrightarrow{C_n}))\bigg),$$
 where, $Re(\varphi(\overrightarrow{C_n}))$ is the real part of $\varphi(\overrightarrow{C_n})$.
\end{theorem}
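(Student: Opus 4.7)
The plan is to apply Theorem 1 to factor $L(\Theta,w)$ through the incidence matrix $H(\Theta,w)$ and then to compute $\det H(\Theta,w)$ explicitly, exploiting the fact that for a cycle $H(\Theta,w)$ is a square $n\times n$ matrix. Since $L(\Theta,w)$ is Hermitian, the factorization furnished by Theorem 1 will give $\det L(\Theta,w) = |\det H(\Theta,w)|^2$, and the identity $|1-z|^2 = 2(1 - \mathrm{Re}(z))$ for $z\in\mathbb{T}$ will then convert this squared modulus into the stated closed form.

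First, I would label the vertices $v_1,\ldots,v_n$ consecutively around the cycle and orient the edges by $\vec{e}_i = \overrightarrow{v_iv_{i+1}}$ for $i=1,\ldots,n-1$ together with $\vec{e}_n = \overrightarrow{v_nv_1}$. Writing $w_i := w(\vec{e}_i)$ and $\varphi_i := \varphi(\vec{e}_i)$, the incidence matrix $H(\Theta,w)$ has the explicit cyclic bidiagonal form: diagonal entries $\sqrt{w_i}$, subdiagonal entries $-\varphi_i^{-1}\sqrt{w_i}$ (from the heads of $\vec{e}_1,\ldots,\vec{e}_{n-1}$), and the single corner entry $-\varphi_n^{-1}\sqrt{w_n}$ at position $(1,n)$ (from the head of $\vec{e}_n$). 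Expanding along the first row leaves only two surviving cofactors: the minor $M_{11}$ is lower triangular with $\det M_{11} = \prod_{i=2}^n\sqrt{w_i}$, and the minor $M_{1n}$ is upper bidiagonal (hence triangular) with $\det M_{1n} = (-1)^{n-1}\prod_{i=1}^{n-1}\varphi_i^{-1}\sqrt{w_i}$. Collecting the sign factors $(-1)^{n+1}$ from the cofactor together with the $(-1)^{n-1}$ and the leading minus, all parities in $n$ cancel, and the $\varphi_i^{-1}$ factors telescope into $\varphi(\overrightarrow{C_n})^{-1} = \prod_{i=1}^n \varphi_i^{-1}$, producing
$$\det H(\Theta,w) = \Bigl(\prod_{i=1}^n\sqrt{w_i}\Bigr)\bigl(1 - \varphi(\overrightarrow{C_n})^{-1}\bigr).$$

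Finally, since $|\varphi(\overrightarrow{C_n})| = 1$ implies $\varphi(\overrightarrow{C_n})^{-1} = \overline{\varphi(\overrightarrow{C_n})}$, we have $\mathrm{Re}(\varphi(\overrightarrow{C_n})^{-1}) = \mathrm{Re}(\varphi(\overrightarrow{C_n}))$, and applying $|1-z|^2 = 2(1 - \mathrm{Re}(z))$ to $z = \varphi(\overrightarrow{C_n})^{-1}$ yields the claimed formula
$$\det L(\Theta,w) = \Bigl(\prod_{\vec{e}\in E(\overrightarrow{C_n})} w(\vec{e})\Bigr)\cdot 2\bigl(1 - \mathrm{Re}(\varphi(\overrightarrow{C_n}))\bigr).$$
The only real nuisance is the sign bookkeeping in the cofactor expansion (the parity $(-1)^{n+1}$ of the cofactor sign coupled with $(-1)^{n-1}$ from the product of subdiagonal entries), but these collapse cleanly; any leftover scalar-phase ambiguity in $\det H(\Theta,w)$ is absorbed upon taking the squared modulus, which is why the final formula is manifestly real and independent of the choice of edge orientation along the cycle.
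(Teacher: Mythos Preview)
Your proof is correct and follows essentially the same route as the paper: both label the cycle, write down the explicit bidiagonal-plus-corner form of $H(\Theta,w)$, and expand along the first row to obtain $\det H(\Theta,w)=\bigl(\prod_i\sqrt{w_i}\bigr)\bigl(1-\varphi(\overrightarrow{C_n})^{-1}\bigr)$. The only difference is cosmetic: the paper then separately writes out $H^T(\Theta,w)$ (which is in fact the conjugate transpose, since the entries carry $\varphi_i$ rather than $\varphi_i^{-1}$) and repeats the cofactor expansion to get $\det H^T(\Theta,w)=\bigl(\prod_i\sqrt{w_i}\bigr)\bigl(1-\varphi(\overrightarrow{C_n})\bigr)$ before multiplying, whereas you shortcut this by invoking $\det L=|\det H|^2$ together with the identity $|1-z|^2=2(1-\mathrm{Re}(z))$ for $z\in\mathbb{T}$. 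Your version is slightly cleaner but not a genuinely different argument.
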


\begin{proof}
Let $(\Theta,w)$ be a weighted $\mathbb{T}$-gain cycle $\overrightarrow{C_n}$, in which the vertices and edges are sequenced by: $\overrightarrow{C_n}=v_1\vec{e_1}v_2\vec{e_2} \cdots v_{n-1}\vec{e}_{n-1}v_n\vec{e}_n$. Now, the weighted $\mathbb{T}$-gain incidence matrix $H(\Theta,w)$ for $\overrightarrow{C_n}$ is given by:

$$\left(
              \begin{array}{cccccc}
                \sqrt{w(\vec{e}_1)} & 0 & 0 & \cdots & 0 & -\sqrt{w(\vec{e}_n)} (\varphi(\vec{e}_n))^{-1}\\
                -\sqrt{w(\vec{e}_1)}(\varphi(\vec{e}_1))^{-1} & \sqrt{w(\vec{e}_2)} & 0 & \cdots & 0 & 0 \\
                0 & -\sqrt{w(\vec{e}_2)}(\varphi(\vec{e}_2))^{-1} & \sqrt{w(\vec{e}_3)} & \cdots & 0 & 0 \\
                \vdots & \vdots & \vdots & \ddots  & \vdots & \vdots  \\
                0 & 0 & 0 & \cdots & \sqrt{w(\vec{e}_{n-1})} & 0 \\
                0 & 0 & 0 & \cdots & -\sqrt{w(\vec{e}_{n-1})}(\varphi(\vec{e}_{n-1}))^{-1} & \sqrt{w(\vec{e}_n)} \\
              \end{array}
            \right).
$$
\\
To find the determinant of the above square matrix, we expand it by the first row. Thus, we get the following expression for the determinant of $H(\Theta,w)$.\\
$$\det(H(\Theta,w))=\sqrt{w(\vec{e}_1)} M_{1,1}+(-1)^n \sqrt{w(\vec{e}_n)} (\varphi(\vec{e}_n))^{-1}M_{1,n},$$

where,

$$M_{1,1}=\det \left(
              \begin{array}{ccccc}
                \sqrt{w(\vec{e}_2)} & 0 & \cdots & 0 & 0 \\
               -\sqrt{w(\vec{e}_2)}(\varphi(\vec{e}_2))^{-1} & \sqrt{w(\vec{e}_3)} & \cdots & 0 & 0 \\
                 \vdots & \vdots & \ddots  & \vdots & \vdots  \\
                 0 & 0 & \cdots & \sqrt{w(\vec{e}_{n-1})} & 0 \\
                 0 & 0 & \cdots & -\sqrt{w(\vec{e}_{n-1})}(\varphi(\vec{e}_{n-1}))^{-1} & \sqrt{w(\vec{e}_n)} \\
              \end{array}
            \right),
$$
\\
and

$$M_{1,n}=\det \left(
              \begin{array}{ccccc}
                -\sqrt{w(\vec{e}_1)}(\varphi(\vec{e}_1))^{-1} & \sqrt{w(\vec{e}_2)} & 0 & \cdots & 0  \\
                0 & -\sqrt{w(\vec{e}_2)}(\varphi(\vec{e}_2))^{-1} & \sqrt{w(\vec{e}_3)} & \cdots & 0  \\
                \vdots & \vdots & \vdots & \ddots  & \vdots  \\
                0 & 0 & 0 & \cdots & \sqrt{w(\vec{e}_{n-1})} \\
                0 & 0 & 0 & \cdots & -\sqrt{w(\vec{e}_{n-1})}(\varphi(\vec{e}_{n-1}))^{-1} \\
              \end{array}
            \right).
$$
\\
Since, both $M_{1,1}$ and $M_{1,n}$ are triangular matrices, and hence the determinants for such types of matrices are always equal to the product of their diagonal entries. So that

$$M_{1,1}=\sqrt{w(\vec{e}_2)}\sqrt{w(\vec{e}_3)} \sqrt{w(\vec{e}_4)} \cdots \sqrt{w(\vec{e}_n)},$$
 and
$$M_{1,n}=(-\sqrt{w(\vec{e}_1)}(\varphi(\vec{e}_1))^{-1})(-\sqrt{w(\vec{e}_2)}(\varphi(\vec{e}_2))^{-1}) \cdots (-\sqrt{w(\vec{e}_{n-1})}(\varphi(\vec{e}_{n-1}))^{-1}).$$
\\
Hence,
\begin{eqnarray*}
  \det(H(\Theta,w)) &=& \sqrt{w(\vec{e}_1)}\sqrt{w(\vec{e}_2)}\sqrt{w(\vec{e}_3)}\cdots \sqrt{w(\vec{e}_n)}
 +((-1)^n\sqrt{w(\vec{e}_n)}(\varphi(\vec{e}_n))^{-1}) \\
   & & (-\sqrt{w(\vec{e}_1)}(\varphi(\vec{e}_1))^{-1})
  \cdots (-\sqrt{w(\vec{e}_{n-1})}(\varphi(\vec{e}_{n-1}))^{-1}), \\
  \\
   &=&\prod_{\vec{e}\in E(\overrightarrow{C_n})}\sqrt{w(\vec{e})}\bigg(1-(\varphi(\overrightarrow{C_n}))^{-1}\bigg).
\end{eqnarray*}

Next, we have to find $H^T(\Theta,w)$ for a weighted $\mathbb{T}$-gain cycle $\overrightarrow{C_n}$, which is given by:

$$\left(
              \begin{array}{cccccc}
                \sqrt{w(\vec{e}_1)} & -\sqrt{w(\vec{e}_1)}\varphi(\vec{e}_1)) & 0 & \cdots & 0 & 0\\
                0 & \sqrt{w(\vec{e}_2)} & -\sqrt{w(\vec{e}_2)}\varphi(\vec{e}_2)) & \cdots & 0 & 0 \\
                0 & 0 & \sqrt{w(\vec{e}_3)} & \cdots & 0 & 0 \\
                \vdots & \vdots & \vdots & \ddots  & \vdots & \vdots  \\
                0 & 0 & 0 & \cdots & \sqrt{w(\vec{e}_{n-1})} & -\sqrt{w(\vec{e}_{n-1})}\varphi(\vec{e}_{n-1}) \\
                -\sqrt{w(\vec{e}_n)} \varphi(\vec{e}_n)) & 0 & 0 & \cdots & 0 & \sqrt{w(\vec{e}_n)} \\
              \end{array}
            \right).
$$
\\
To find the determinant of the above square matrix, we expand it by the first column. Thus, we get the following expression for the determinant of $H^T(\Theta,w)$.\\
$$\det(H^T(\Theta,w))=\sqrt{w(\vec{e}_1)} M_{1,1}+(-1)^n\sqrt{w(\vec{e}_1)} \varphi(\vec{e}_1)M_{n,1},$$

where,

$$M_{1,1}=\det \left(
              \begin{array}{ccccc}
                \sqrt{w(\vec{e}_2)} & -\sqrt{w(\vec{e}_2)}\varphi(\vec{e}_2) & \cdots & 0 & 0 \\
                 0 & \sqrt{w(\vec{e}_3)} & \cdots & 0 & 0 \\
                 \vdots & \vdots & \ddots  & \vdots & \vdots  \\
                 0 & 0 & \cdots & \sqrt{w(\vec{e}_{n-1})} & -\sqrt{w(\vec{e}_{n-1})}\varphi(\vec{e}_{n-1}) \\
                 0 & 0 & \cdots & 0 & \sqrt{w(\vec{e}_n)} \\
              \end{array}
            \right),
$$
\\
and

$$M_{n,1}=\left(
              \begin{array}{ccccc}
                 -\sqrt{w(\vec{e}_1)}\varphi(\vec{e}_1)) & 0 & \cdots & 0 & 0\\
                \sqrt{w(\vec{e}_2)} & -\sqrt{w(\vec{e}_2)}\varphi(\vec{e}_2)) & \cdots & 0 & 0 \\
                0 & \sqrt{w(\vec{e}_3)} & \cdots & 0 & 0 \\
                 \vdots & \vdots & \ddots  & \vdots & \vdots  \\
                 0 & 0 & \cdots & \sqrt{w(\vec{e}_{n-1})} & -\sqrt{w(\vec{e}_{n-1})}\varphi(\vec{e}_{n-1}) \\
              \end{array}
            \right).
$$
\\
Since, both $M_{1,1}$ and $M_{n,1}$ are triangular matrices, and hence the determinants for such types of matrices are always equal to the product of their diagonal entries. So that

$$M_{1,1}=\sqrt{w(\vec{e}_2)}\sqrt{w(\vec{e}_3)} \sqrt{w(\vec{e}_4)} \cdots \sqrt{w(\vec{e}_n)},$$
 and
$$M_{n,1}=(-\sqrt{w(\vec{e}_1)}\varphi(\vec{e}_1))(-\sqrt{w(\vec{e}_2)}\varphi(\vec{e}_2)) \cdots (-\sqrt{w(\vec{e}_{n-1})}\varphi(\vec{e}_{n-1})).$$
\\
Hence,
\begin{eqnarray*}
  \det(H^T(\Theta,w)) &=& \sqrt{w(\vec{e}_1)}\sqrt{w(\vec{e}_2)}\sqrt{w(\vec{e}_3)}\cdots \sqrt{w(\vec{e}_n)}
 +((-1)^n\sqrt{w(\vec{e}_n)}\varphi(\vec{e}_n)) \\
   & & (-\sqrt{w(\vec{e}_1)}\varphi(\vec{e}_1))
  \cdots (-\sqrt{w(\vec{e}_{n-1})}\varphi(\vec{e}_{n-1})), \\
  \\
   &=&\prod_{\vec{e}\in E(\overrightarrow{C_n})}\sqrt{w(\vec{e})}\bigg(1-\varphi(\overrightarrow{C_n})\bigg).
\end{eqnarray*}
Since, $\det(L(\Theta,w))=\det(H(\Theta,w)).\det(H^T(\Theta,w))$. This, implies that

\begin{eqnarray*}
  \det(L(\Theta,w)) &=& \prod_{\vec{e}\in E(\overrightarrow{C_n})}\sqrt{w(\vec{e})}\bigg(1-(\varphi(\overrightarrow{C_n}))^{-1}\bigg).\prod_{\vec{e}\in E(\overrightarrow{C_n})}\sqrt{w(\vec{e})}\bigg(1-\varphi(\overrightarrow{C_n})\bigg) \\
  \\
   &=& \prod_{\vec{e}\in E(\overrightarrow{C_n})}w(\vec{e})\bigg(2(1-Re(\varphi(\overrightarrow{C_n}))\bigg).
\end{eqnarray*}

\end{proof}

The following corollary is the immediate consequence of Theorem \ref{Theorem2}.
\begin{corollary}\label{corollary1}
Let $(\Theta,w)$ be a weighted $\mathbb{T}$-gain cycle $\overrightarrow{C_n}$ of order $n$, then $\det(L(\Theta,w))=0$, if and only if $\overrightarrow{C_n}$ is balanced, i.e., $\varphi(\overrightarrow{C_n})=(\varphi(\overrightarrow{C_n}))^{-1}=1$.
\end{corollary}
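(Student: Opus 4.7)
The plan is to read off the corollary directly from Theorem \ref{Theorem2}, exploiting two facts: that the edge weights are strictly positive, and that a unit complex number with real part equal to $1$ must be $1$. Recall the formula
$$\det(L(\Theta,w))=\prod_{\vec{e}\in E(\overrightarrow{C_n})}w(\vec{e})\bigg(2(1-Re(\varphi(\overrightarrow{C_n})))\bigg).$$

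First I would isolate the factor that can vanish. Since $(\Theta,w)$ is a \emph{positively} weighted $\mathbb{T}$-gain graph, Definition \ref{def5} gives $w(\vec{e})\in\mathbb{R}^{+}$ for every edge, so $\prod_{\vec{e}\in E(\overrightarrow{C_n})}w(\vec{e})>0$. Consequently $\det(L(\Theta,w))=0$ is equivalent to $1-Re(\varphi(\overrightarrow{C_n}))=0$, i.e.\ $Re(\varphi(\overrightarrow{C_n}))=1$.

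Next I would translate this real-part condition back to a statement about $\varphi(\overrightarrow{C_n})$ itself. Because $\varphi$ takes values in the circle group $\mathbb{T}$, the product $\varphi(\overrightarrow{C_n})$ lies on the unit circle, so we may write $\varphi(\overrightarrow{C_n})=e^{i\theta}$ for some $\theta\in\mathbb{R}$. The condition $\cos\theta=1$ forces $\theta\equiv 0\pmod{2\pi}$, hence $\varphi(\overrightarrow{C_n})=1$, and then automatically $(\varphi(\overrightarrow{C_n}))^{-1}=1$ as well, which is precisely the balancedness condition in the statement. Conversely, balancedness yields $Re(\varphi(\overrightarrow{C_n}))=1$ and the formula returns $\det(L(\Theta,w))=0$.

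There is no real obstacle here; the only point worth highlighting is the use of positivity of the weights to guarantee that the leading product does not contribute a spurious zero, and the use of $\varphi(\overrightarrow{C_n})\in\mathbb{T}$ to upgrade ``real part equals one'' to ``equals one''. For this reason the argument is essentially a one-line deduction from Theorem \ref{Theorem2} and the definition of a positively weighted $\mathbb{T}$-gain graph.
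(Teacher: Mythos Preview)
Your argument is correct and matches the paper's own treatment: the paper simply records this corollary as ``the immediate consequence of Theorem \ref{Theorem2}'' without giving a separate proof, and your deduction (positivity of the weight product plus $\varphi(\overrightarrow{C_n})\in\mathbb{T}$ so that $Re(\varphi(\overrightarrow{C_n}))=1$ forces $\varphi(\overrightarrow{C_n})=1$) is exactly what that sentence unpacks to.
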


A graph is said to be $1${\it-tree} if it is connected and also unicyclic. Now, we define a graph to be {$1${\it-forest} if all of its components are $1$-trees or more precisely we say that a $1$-forest is a disjoint union of $1$-trees. A graph is said to be {\it spanning $1$-forest} of a graph $\Sigma$, if it is a $1$-forest and also a spanning subgraph of graph $\Sigma$. The collection of all spanning $1$-forest of $\Sigma$ is denoted by $\Psi(\Sigma)$.

\begin{lemma}\label{Lemma2}
Let $(\Theta,w)=(\Sigma, \varphi, w)$ be a weighted $\mathbb{T}$-gain graph, where the underlying graph $\Sigma=(V,E)$ is a $1$-tree of order $n$, with a unique cycle $\overrightarrow{C_{q}}$ of order $q$. Then

 $$\det(L(\Theta,w))=\prod_{\vec{e}\in \vec{E}(\Theta,w)}w(\vec{e})\bigg(2(1-Re(\varphi(\overrightarrow{C_q}))\bigg).$$
\end{lemma}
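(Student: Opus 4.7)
My plan is to prove the lemma by induction on $n-q$, the number of edges of $\Sigma$ lying outside the unique cycle $\overrightarrow{C_q}$. The base case $n=q$ is exactly Theorem \ref{Theorem2}. For the inductive step, suppose $n>q$; then the non-cycle edges form a nonempty forest attached to $\overrightarrow{C_q}$, so $\Sigma$ contains at least one leaf $v$, and this leaf lies off the cycle. Let $e$ be the unique edge incident to $v$. The key reduction is the ``peeling'' identity
$$\det\bigl(L(\Theta,w)\bigr)=w(e)\,\det\bigl(L(\Theta-v,\varphi,w)\bigr),$$
after which the induction hypothesis applied to the $1$-tree $\Theta-v$ (which still has $\overrightarrow{C_q}$ as its unique cycle) completes the proof, since multiplying by the extra factor $w(e)$ restores the product over all of $\vec{E}(\Theta,w)$.

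To establish the peeling identity I would invoke Theorem \ref{Theorem1} to write $L(\Theta,w)=H(\Theta,w)\,H^T(\Theta,w)$. Since a $1$-tree on $n$ vertices has exactly $n$ edges, $H(\Theta,w)$ is a square $n\times n$ matrix and $\det(L(\Theta,w))=\det(H(\Theta,w))\cdot\det(H^T(\Theta,w))$. By Definition \ref{def7}, the row of $H(\Theta,w)$ indexed by the pendant vertex $v$ has a unique nonzero entry in the column of $e$, equal either to $\sqrt{w(e)}$ or to $-\sqrt{w(e)}\,(\varphi(\vec{e}))^{-1}$, depending on whether $v$ is the tail or the head of $\vec{e}$. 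Expanding $\det(H(\Theta,w))$ along this row gives
$$\det\bigl(H(\Theta,w)\bigr)=\varepsilon\,\sqrt{w(e)}\,\alpha\,\det\bigl(H(\Theta-v,\varphi,w)\bigr),$$
where $\varepsilon=\pm1$ is the cofactor sign, $\alpha\in\mathbb{T}$ is a unit-modulus factor, and the minor obtained by deleting row $v$ and column $e$ is precisely the weighted incidence matrix of $\Theta-v$. Performing the mirror expansion of $\det(H^T(\Theta,w))$ along the column indexed by $v$ produces the conjugate factor $\overline{\alpha}$ together with the same sign $\varepsilon$. Multiplying the two expansions, $\alpha\overline{\alpha}=1$, $\varepsilon^2=1$, and the two $\sqrt{w(e)}$ combine into $w(e)$, while the remaining minors recombine, via another application of Theorem \ref{Theorem1} to $\Theta-v$, into $\det(L(\Theta-v,\varphi,w))$.

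The main obstacle I anticipate is the bookkeeping in the cofactor expansion: tracking the signs $(-1)^{v+e}$ and the unit-modulus factors $(\varphi(\vec{e}))^{\pm1}$ in matched row/column expansions of $H$ and $H^T$, and verifying that they cancel in the product $\det(H)\det(H^T)$. A clean shortcut is to observe that $L(\Theta,w)=HH^T$ is positive semidefinite, so $\det(L(\Theta,w))=|\det(H(\Theta,w))|^2$; passing to moduli makes the peeling relation $|\det(H(\Theta,w))|^2=w(e)\,|\det(H(\Theta-v,\varphi,w))|^2$ transparent and disposes of all phase and sign issues at once, leaving only the iteration down to the cycle $\overrightarrow{C_q}$ and the invocation of Theorem \ref{Theorem2}.
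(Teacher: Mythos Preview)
Your argument is correct. Both your proof and the paper's reduce $\det(L(\Theta,w))=\det(H)\det(H^T)$ to the cycle case of Theorem~\ref{Theorem2} by stripping off the pendant edges, so the underlying idea is the same; the difference is organizational. The paper labels the vertices and edges globally so that $H(\Theta,w)$ becomes a block upper-triangular matrix with $H(\overrightarrow{C_q},w)$ as the leading diagonal block and the remaining $\sqrt{w(\vec e_p)}$ on the diagonal, then reads off $\det(H)$ in one step; you perform the same reduction inductively, one leaf at a time. Your shortcut $\det(L(\Theta,w))=|\det(H(\Theta,w))|^2$ (valid because the paper's $H^T$ is really the conjugate transpose, so $L=HH^*$ is positive semidefinite) is a genuine simplification over the paper, which computes $\det(H)$ and $\det(H^T)$ separately and only recombines them at the end.
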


\begin{proof}
Let $\overrightarrow{C_q}$ be the unique cycle. Let sequenced the vertices and edges in $\overrightarrow{C_q}$ by the following pattern:\\ $$\overrightarrow{C_q}=v_1\vec{e_1}v_2\vec{e_2} \cdots v_{q-1}\vec{e}_{q-1}v_q\vec{e}_q.$$
Now, we define the orientation of those edges which are not belonging to $\overrightarrow{C_q}$. Consider for $j<k$, the edge $\vec{e}_{j,k}$ has tail $\mathcal{T}(\vec{e}_{j,k})=v_j$ and head $\mathcal{H}(\vec{e}_{j,k})=v_k$. Let labelled the vertices in such a way that every edge $\vec{e}_{j,k}$ not in $\overrightarrow{C_q}$ has $v_j$ nearer to $\overrightarrow{C_q}$ than $v_k$. Then, the weighted $\mathbb{T}$-gain incidence matrix $H(\Theta,w)$ is given by:

$$\left(
              \begin{array}{c|ccccc}
              H(\overrightarrow{C_q},w) &  &  & \#  &  & \\
              \hline
                 & \sqrt{w(\vec{e}_{q+1})}& \# & \cdots & \# & \#\\
                 & 0 & \sqrt{w(\vec{e}_{q+2})} & \cdots & \# & \# \\
                O & 0 & 0 & \cdots & \# & \# \\
                 & \vdots & \vdots & \ddots  & \vdots & \vdots  \\
                 & 0 & 0 & \cdots & 0 & \sqrt{w(\vec{e}_{n})} \\
              \end{array}
            \right).
$$
\\
Clearly, it is a triangular (upper-triangular) block matrix with the first diagonal block is $H(\overrightarrow{C_q},w)$; whereas the other main diagonals entries are corresponding to the heads of those edges which are not belonging to $\overrightarrow{C_q}$. Thus, we get the following expression for the determinant of $H(\Theta,w)$.\\

\begin{eqnarray*}
  \det(H(\Theta,w)) &=& \det(H(\overrightarrow{C_q},w)). \prod_{p=q+1}^{n}\sqrt{w(\vec{e_p})} \\
   &=& \prod_{p=1}^{q}\sqrt{w(\vec{e_p})}\bigg(1-(\varphi(\overrightarrow{C_q}))^{-1}\bigg).\prod_{p=q+1}^{n}\sqrt{w(\vec{e_p})}\\
   &=&\prod_{\vec{e}\in \vec{E}(\Theta,w)}\sqrt{w(\vec{e})}\bigg(1-(\varphi(\overrightarrow{C_q}))^{-1}\bigg).
\end{eqnarray*}

It is easy to find it for $\det(H^T(\Theta,w))$. Here, we can write it as follows:\\

$$\det(H^T(\Theta,w))=\prod_{\vec{e}\in \vec{E}(\Theta,w)}\sqrt{w(\vec{e})}\bigg(1-\varphi(\overrightarrow{C_q})\bigg).$$

Since, $\det(L(\Theta,w))=\det(H(\Theta,w)).\det(H^T(\Theta,w))$. This, implies that

\begin{eqnarray*}
  \det(L(\Theta,w)) &=& \prod_{\vec{e}\in \vec{E}(\Theta,w)}\sqrt{w(\vec{e})}\bigg(1-(\varphi(\overrightarrow{C_q}))^{-1}\bigg).\prod_{\vec{e}\in \vec{E}(\Theta,w)}\sqrt{w(\vec{e})} \bigg(1-\varphi(\overrightarrow{C_q})\bigg) \\
  \\
   &=& \prod_{\vec{e}\in \vec{E}(\Theta,w)}w(\vec{e})\bigg(2(1-Re(\varphi(\overrightarrow{C_q}))\bigg).
\end{eqnarray*}
\end{proof}

\begin{lemma}\label{Lemma3}
Let $(\Theta,w)=(\Sigma, \varphi, w)$ be a weighted $\mathbb{T}$-gain graph, where the underlying graph $\Sigma=(V,E)$ is a $1$-forest of order $n$, then

 $$\det(L(\Theta,w))=\prod_{\vec{e}\in \vec{E}(\Theta,w)}w(\vec{e})~{\prod}_\Upsilon \bigg(2(1-Re(\varphi(\overrightarrow{C_\Upsilon}))\bigg),$$
 where the second product is taken over all component $1$-trees $\Upsilon$ of $(\Theta,w)$, with an unique cycle $\overrightarrow{C_\Upsilon}$.
\end{lemma}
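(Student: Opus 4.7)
The plan is to reduce Lemma \ref{Lemma3} to Lemma \ref{Lemma2} by exploiting the block decomposition induced by the connected components of the $1$-forest. Since every vertex (and every edge) of $\Sigma$ belongs to exactly one component $1$-tree, and no edge joins two different components, a suitable re-ordering of the vertices and edges (listing first those in $\Upsilon_1$, then those in $\Upsilon_2$, and so on) will make both $H(\Theta,w)$ and $H^T(\Theta,w)$ block diagonal, hence so is $L(\Theta,w)=H(\Theta,w)H^T(\Theta,w)$ by Theorem \ref{Theorem1}.

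First I would write $\Sigma$ as the disjoint union $\Upsilon_1\sqcup\Upsilon_2\sqcup\cdots\sqcup\Upsilon_k$ of its component $1$-trees, each $\Upsilon_i$ carrying the restricted gain $\varphi_i$ and weight $w_i$ and containing a unique cycle $\overrightarrow{C_{\Upsilon_i}}$. After reordering the rows of $H(\Theta,w)$ by components of vertices and the columns by components of edges (which does not affect $HH^T$), the weighted incidence matrix takes the block diagonal form
\[
H(\Theta,w)=\mathrm{diag}\bigl(H(\Upsilon_1,w_1),\,H(\Upsilon_2,w_2),\,\ldots,\,H(\Upsilon_k,w_k)\bigr),
\]
and consequently
\[
L(\Theta,w)=\mathrm{diag}\bigl(L(\Upsilon_1,w_1),\,L(\Upsilon_2,w_2),\,\ldots,\,L(\Upsilon_k,w_k)\bigr).
\]

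Next, I would apply the determinant multiplicativity for block diagonal matrices:
\[
\det(L(\Theta,w))=\prod_{i=1}^{k}\det(L(\Upsilon_i,w_i)).
\]
Each factor is then evaluated by Lemma \ref{Lemma2}, yielding
\[
\det(L(\Upsilon_i,w_i))=\prod_{\vec e\in \vec E(\Upsilon_i)}w(\vec e)\,\Bigl(2\bigl(1-\mathrm{Re}(\varphi(\overrightarrow{C_{\Upsilon_i}}))\bigr)\Bigr).
\]
Multiplying over $i=1,\ldots,k$ and using that $\vec E(\Theta,w)$ is the disjoint union of the edge sets $\vec E(\Upsilon_i)$ collapses the weight product into the single product $\prod_{\vec e\in\vec E(\Theta,w)}w(\vec e)$, while the cycle factors combine into the product $\prod_{\Upsilon}(2(1-\mathrm{Re}(\varphi(\overrightarrow{C_\Upsilon}))))$ indexed over all component $1$-trees, giving the stated identity.

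There is no substantial obstacle here; the only point requiring a small justification is the block diagonalization, and in particular that the permutation acting on rows is the same as the one acting on columns (so its sign cancels when taking $HH^T$). Once this bookkeeping is in place, Lemma \ref{Lemma2} does all the real work and the result is immediate.
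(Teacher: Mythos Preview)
Your proof is correct and follows essentially the same route as the paper: block-diagonalize by the connected components of the $1$-forest and then invoke Lemma \ref{Lemma2} on each block. The only cosmetic difference is that the paper takes the determinant of the (square) incidence matrix $H(\Theta,w)=\bigoplus_j H_j$ directly, whereas you pass through $L(\Theta,w)=\bigoplus_j L(\Upsilon_j,w_j)$; both lead immediately to the product formula.
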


\begin{proof}
Let $(\Theta,w)$ be a $1$-forest with $p$-components. Let there are $H_1, H_2, \cdots, H_p$ incidence matrices corresponding to the components of $(\Theta,w)$. Let denote the direct sum of matrices by symbol $\bigoplus$, which define the block diagonal matrix of any matrices. By arranging the vertices and edges of $(\Theta,w)$ in a suitable way to make the incidence matrix $H(\Theta,w)$ into a block diagonal matrix $\bigoplus_{j=1}^{p}H_j$, where each $H_j$ is corresponding to the incidence matrix of a $1$-tree component of the $1$-forest and clearly, $p$ stand for the total components of $1$-forest. Then, $\det(H(\Theta,w))=\prod_{j=1}^{p}\det(H_j)$. The rest of the proof to be completed in the light of the above lemma, see Lemma \ref{Lemma2}.
\end{proof}

\begin{lemma}\label{Lemma4}
Let $(\Theta,w)$ be a weighted $\mathbb{T}$-gain graph of order $n$ and let $\Upsilon$ be a spanning subgraph of $(\Theta,w)$ consist of exactly $n$ number of edges. Then $\det(L(\Upsilon)) \neq 0$ if and only if $\Upsilon$ is a spanning $1$-forest of $(\Theta,w)$.
\end{lemma}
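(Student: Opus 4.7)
The plan is to exploit the block-diagonal structure of $L(\Upsilon)$ with respect to the connected components of $\Upsilon$, and then combine the counting constraint ($n$ vertices, $n$ edges) with Lemma \ref{Lemma1} and Lemma \ref{Lemma3}. Since $\Upsilon$ is a spanning subgraph on $n$ vertices, if its components are $\Upsilon_1,\dots,\Upsilon_p$ with $n_i=|V(\Upsilon_i)|$ and $m_i=|E(\Upsilon_i)|$, then $\sum n_i=n$ and $\sum m_i=n$, hence $\sum (m_i-n_i)=0$. Since each component is connected, $m_i\ge n_i-1$, so each term satisfies $m_i-n_i\ge -1$.

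The key combinatorial dichotomy I would extract from this is: either $m_i=n_i$ for every $i$ (in which case every component is a $1$-tree and $\Upsilon$ is a spanning $1$-forest), or there exists at least one index $j$ with $m_j=n_j-1$ (a tree component). Indeed, if some $m_i>n_i$, the zero-sum condition forces a compensating component $\Upsilon_j$ with $m_j<n_j$, which must then be a tree. This reduces the lemma to checking two cases.

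Next I would use the fact that after a suitable vertex/edge ordering, the incidence matrix $H(\Upsilon,w)$ becomes block diagonal, $H(\Upsilon,w)=\bigoplus_{i=1}^{p}H(\Upsilon_i,w)$, and consequently by Theorem \ref{Theorem1}, $L(\Upsilon,w)=\bigoplus_{i=1}^{p}L(\Upsilon_i,w)$, so
\[
\det(L(\Upsilon,w))=\prod_{i=1}^{p}\det(L(\Upsilon_i,w)).
\]
If $\Upsilon$ is not a spanning $1$-forest, the dichotomy above produces a tree component $\Upsilon_j$, and Lemma \ref{Lemma1} immediately gives $\det(L(\Upsilon_j,w))=0$, forcing $\det(L(\Upsilon,w))=0$. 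Conversely, if $\Upsilon$ is a spanning $1$-forest, Lemma \ref{Lemma3} yields the closed formula
\[
\det(L(\Upsilon,w))=\prod_{\vec{e}\in\vec{E}(\Upsilon,w)}w(\vec{e})\;{\prod}_{\Upsilon_i}\bigl(2(1-\mathrm{Re}(\varphi(\overrightarrow{C_{\Upsilon_i}})))\bigr),
\]
which is nonzero (under the convention that cycles are non-balanced, equivalently $\mathrm{Re}(\varphi(\overrightarrow{C_{\Upsilon_i}}))\ne 1$ for each $i$).

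The step I expect to be the main obstacle is not algebraic but interpretive: the converse direction in Lemma \ref{Lemma3} is sharp only when none of the cycles $\overrightarrow{C_{\Upsilon_i}}$ is balanced, since a balanced cycle would make the corresponding factor $2(1-\mathrm{Re}(\varphi(\overrightarrow{C_{\Upsilon_i}})))$ vanish. The combinatorial/counting direction (no tree component $\Leftrightarrow$ $1$-forest), together with block-diagonality of $L(\Upsilon,w)$, is routine; the delicate point is to state the converse with the implicit assumption that the cycles are non-balanced, which is natural in the weighted $\mathbb{T}$-gain setting being developed here.
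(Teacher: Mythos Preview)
Your approach is essentially the same as the paper's: block-diagonalize $L(\Upsilon)$ over the connected components, then use the edge/vertex count together with Lemma~\ref{Lemma1} to show that if $\Upsilon$ is not a $1$-forest some component is a tree and the determinant vanishes. Your counting argument via $\sum_i(m_i-n_i)=0$ with $m_i-n_i\ge -1$ is a cleaner packaging of what the paper does in three separate sub-cases (isolated vertex, tree component, component with surplus edges), but it is the same idea.

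For the converse, the paper is actually less careful than you are: it simply writes that ``the converse part can be proved by using Kirchhoff's Matrix-Tree-Theorem with a little more calculations'' and stops. Your observation is correct and worth stating explicitly: by Lemma~\ref{Lemma3} (or Corollary~\ref{corollary1}) a spanning $1$-forest has $\det(L(\Upsilon))\ne 0$ \emph{only when none of its component cycles is balanced}. As literally stated, the ``if'' direction of the lemma fails for any $1$-forest containing a balanced cycle. The paper uses Lemma~\ref{Lemma4} only inside the proof of Theorem~\ref{Theroem3}, where the balanced-cycle case contributes a zero summand anyway, so the slip is harmless downstream; but you have put your finger on a genuine inaccuracy in the lemma's statement rather than a gap in your own argument.
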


\begin{proof}
Let a spanning subgraph $\Upsilon$ of $(\Theta,w)$ consist of exactly $n=V(\Theta,w)$ edges. Let $\det(L(\Upsilon)) \neq 0$, then to prove $\Upsilon$ is a spanning $1$-forest of $(\Theta,w)$, we need to prove the components of $\Upsilon$ are $1$-trees. For this reason, arranging the vertices and edges of $\Upsilon$ in a suitable way to make the Laplacian matrix $L(\Upsilon)$ into a block diagonal matrix $\bigoplus_{j=1}^{p}L_j$, where all $L_j$ are corresponding to the components of $\Upsilon$, and the term $p$ represent the number of components in $\Upsilon$. This implies that,

$$\det(L(\Upsilon))=\prod_{j=1}^{p}\det(L_j).$$

Assume that $\Upsilon$ has an isolated vertex. In this case, $L(\Upsilon)$ contains a row which is zero. This implies that $\det(L(\Upsilon))=0$. Now consider the case, when some component of $\Upsilon$ is a tree for some $j$, by Lemma \ref{Lemma1}, we have $\det(L_j)=0$, which further implies that $\det(L(\Upsilon))=0$. Thus, in both cases, the contradiction arise.\\

\textbf{Claim:} If $B_p$ is a component of $\Upsilon$ and all the $B_p$,s consist of the same number of vertices and edges.\\
 Assume $B_p$, for some $p$, has $q$ vertices and $q+k$ edges, where $k \geq 1$. Then the $n-q$ vertices and $n-q-k$ edges not in $B_p$ form either a tree or a disconnected graph containing trees as components. Thus, in both cases $\det(L(\Upsilon))=0$, which is contradiction. Thus, our claim.\\
 So that all the components of $\Upsilon$ consist of the same number of vertices and edges implies that all the components $B_p$ of $\Upsilon$ are $1$-trees and thus $\Upsilon$ is a spanning $1$-forest. Hence, $\Upsilon$ is a spanning $1$-forest of $(\Theta,w)$.\\

 The converse part can be proved by using Kirchhoff's Matrix-Tree-Theorem with a little more calculations.

\end{proof}

\begin{theorem}\label{Theroem3}
Let $(\Theta,w)=(\Sigma, \varphi, w)$ be a connected weighted $\mathbb{T}$-gain graph, then

 $$\det(L(\Theta,w))=\sum_{\Upsilon \in \Psi(\Sigma)}\prod_{\vec{e}\in \Upsilon}w(\vec{e})~{\prod}_{q \in \Upsilon}\bigg(2(1-Re(\varphi(\overrightarrow{C_q}))\bigg),$$
 where the summation taken over all spanning $1$-forest $\Upsilon$ of $(\Theta,w)$ and $q \in \Upsilon$ is the $1$-trees component $q$ in the spanning $1$-forest.
\end{theorem}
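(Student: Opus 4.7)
The strategy will be to apply the Cauchy--Binet formula to the factorization $L(\Theta,w) = H(\Theta,w)\,H(\Theta,w)^T$ established in Theorem \ref{Theorem1}, and then invoke the two preceding lemmas to identify and evaluate the surviving summands.

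Writing $H := H(\Theta,w)$, an $n \times m$ matrix with $m = |E(\Sigma)|$, the Cauchy--Binet formula gives
\begin{equation*}
\det(L(\Theta,w)) \;=\; \det(HH^T) \;=\; \sum_{\substack{S \subseteq E(\Sigma) \\ |S|=n}} \det(H[\cdot,S])\,\det(H^T[S,\cdot]),
\end{equation*}
where $H[\cdot,S]$ denotes the $n \times n$ submatrix built from the columns indexed by $S$, and $H^T[S,\cdot]$ denotes the $n \times n$ submatrix of $H^T$ built from the rows indexed by $S$. For each $S$, let $\Upsilon_S$ be the spanning subgraph of $\Sigma$ whose edge set is $S$ (inheriting orientations, gains, and weights from $\Theta_w$). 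Then $H[\cdot,S]$ coincides with $H(\Upsilon_S)$ and $H^T[S,\cdot] = H(\Upsilon_S)^T$, so applying Theorem \ref{Theorem1} to $\Upsilon_S$ gives
\begin{equation*}
\det(H[\cdot,S])\,\det(H^T[S,\cdot]) \;=\; \det\bigl(H(\Upsilon_S)\,H(\Upsilon_S)^T\bigr) \;=\; \det(L(\Upsilon_S)).
\end{equation*}
Hence $\det(L(\Theta,w)) = \sum_{|S|=n}\det(L(\Upsilon_S))$, with the sum ranging over all $n$-edge spanning subgraphs of $\Sigma$.

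Next, since each $\Upsilon_S$ is a spanning subgraph with exactly $n$ edges, Lemma \ref{Lemma4} says that $\det(L(\Upsilon_S))$ vanishes unless $\Upsilon_S$ is a spanning $1$-forest of $(\Theta,w)$, so the sum collapses to an indexing over $\Upsilon \in \Psi(\Sigma)$. For each such spanning $1$-forest $\Upsilon$, Lemma \ref{Lemma3} supplies the explicit evaluation
\begin{equation*}
\det(L(\Upsilon)) \;=\; \prod_{\vec{e} \in \vec{E}(\Upsilon)} w(\vec{e})\;{\prod}_{q \in \Upsilon}\bigl(2(1 - Re(\varphi(\overrightarrow{C_q})))\bigr),
\end{equation*}
and substituting this into the Cauchy--Binet expansion yields the claimed identity.

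There is no deep obstacle here, as the theorem is essentially a packaging of Cauchy--Binet with Lemmas \ref{Lemma3} and \ref{Lemma4}. The only point that requires verification is the column/subgraph identification $H[\cdot,S] = H(\Upsilon_S)$, which is immediate from the column-wise definition of the weighted incidence matrix; in particular, $n$-edge subgraphs that leave some vertex isolated produce a zero row in $H[\cdot,S]$, so they contribute zero automatically and are safely absorbed into Lemma \ref{Lemma4}'s vanishing statement.
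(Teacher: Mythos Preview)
Your proof is correct and follows essentially the same route as the paper: apply Cauchy--Binet to the factorization $L=HH^T$, recognize each summand as $\det(L(\Upsilon_S))$ for a spanning $n$-edge subgraph, then use Lemma~\ref{Lemma4} to discard the non-$1$-forests and Lemma~\ref{Lemma3} to evaluate the survivors. Your write-up is slightly more explicit about the column/subgraph identification and the isolated-vertex case, but the argument is the same.
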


\begin{proof}
Since $L(\Theta,w)=H(\Theta,w)H^T(\Theta,w)$, by the Binet-Cauchy theorem \cite{Broida1989}, we obtain
\begin{equation*}
\text{det}(L(\Theta,w))=\sum_{M}\text{det}(H(M))\text{det}H^T(M)=\sum_{M}\text{det} (L(M)),
\end{equation*}

where $M$ is a spanning subgraph of $\Sigma$ with exactly $n$ number of edges.\\

Next, Lemma \ref{Lemma4} implies that
$$\text{det}(L(\Theta,w))=\sum_{\Upsilon \in \Psi(\Sigma)}\text{det}(L(\Upsilon),$$
where the summation taken over all spanning $1$-forests $\Upsilon$ of $\Sigma$.\\
Hence, Lemma \ref{Lemma3} complete the proof of the theorem.

\end{proof}

Let denote the rank of a matrix $M$ by $\text{rank}(M)$, which is the order of the largest submatrix (square matrix) of $M$ of non-zero determinant. The following theorems characterize balance in weighted $\mathbb{T}$-gain graphs.

\begin{theorem}\label{Theorem4}
Let $(\Theta,w)=(\Sigma, \varphi, w)$ be a connected unbalanced weighted $\mathbb{T}$-gain graph of order $n$, then $\text{rank}(H(\Theta,w))=n$ and $\text{rank}(H^T(\Theta,w))=n$.
\end{theorem}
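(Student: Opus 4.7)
The plan is to reduce the claim to the single assertion $\det(L(\Theta,w)) \neq 0$, and then to extract non-vanishing of the determinant directly from the spanning $1$-forest formula of Theorem \ref{Theroem3}. By Theorem \ref{Theorem1}, $L(\Theta,w)=H(\Theta,w)H^T(\Theta,w)$, and for any matrix $H$ one has $\text{rank}(HH^T)\leq \text{rank}(H)\leq n$. Hence once $\text{rank}(L(\Theta,w))=n$ is established, I get $\text{rank}(H(\Theta,w))=n$, and then $\text{rank}(H^T(\Theta,w))=n$ because transposition preserves rank.

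To prove $\det(L(\Theta,w))\neq 0$, I apply Theorem \ref{Theroem3} to write
$$\det(L(\Theta,w))=\sum_{\Upsilon \in \Psi(\Sigma)}\prod_{\vec{e}\in \Upsilon}w(\vec{e})\,{\prod}_{q\in \Upsilon}\bigl(2(1-Re(\varphi(\overrightarrow{C_q})))\bigr).$$
Since $w$ takes positive real values and every cycle gain lies on the unit circle (so $Re(\varphi(\overrightarrow{C_q}))\leq 1$, with equality iff $\varphi(\overrightarrow{C_q})=1$), every factor appearing in each summand is a non-negative real number. Thus $\det(L(\Theta,w))$ is a sum of non-negative reals, and it suffices to exhibit a single $\Upsilon_0\in \Psi(\Sigma)$ whose contribution is strictly positive, that is, an element $\Upsilon_0$ of $\Psi(\Sigma)$ in which every cycle component is unbalanced.

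For this I use the hypotheses that $(\Theta,w)$ is connected and unbalanced: connectedness gives a spanning structure to work with, and unbalancedness yields at least one unbalanced cycle $\overrightarrow{C}\subseteq \Sigma$, i.e.\ $\varphi(\overrightarrow{C})\neq 1$ and hence $Re(\varphi(\overrightarrow{C}))<1$. I then enlarge $\overrightarrow{C}$ to a spanning $1$-tree by contracting $\overrightarrow{C}$ to a single vertex (the resulting graph $\Sigma'$ is still connected), choosing any spanning tree $T'$ of $\Sigma'$, and uncontracting: the union $\Upsilon_0$ of $\overrightarrow{C}$ with the edges of $T'$ has exactly $|V(\Sigma)|$ edges, spans $\Sigma$, is connected, and carries $\overrightarrow{C}$ as its unique cycle. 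Hence $\Upsilon_0\in \Psi(\Sigma)$ and its summand equals $\prod_{\vec{e}\in \Upsilon_0}w(\vec{e})\cdot 2(1-Re(\varphi(\overrightarrow{C})))>0$. Combined with the non-negativity of the remaining terms, this forces $\det(L(\Theta,w))>0$, completing the argument.

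\textbf{Main obstacle.} None of the reduction steps is delicate; the single non-trivial point is the combinatorial step of producing a spanning $1$-forest all of whose cycles are unbalanced. The contract/spanning-tree construction handles this because connectedness propagates through the contraction, so one unbalanced cycle already suffices — no parity or sign-change arguments over multiple cycles are needed.
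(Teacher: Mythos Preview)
Your argument is correct. Both your proof and the paper's hinge on the same combinatorial construction: pick an unbalanced cycle $\overrightarrow{C}$ and extend it to a spanning $1$-tree $\Upsilon_0$ (your contraction/spanning-tree step is exactly how one justifies that such a $1$-tree exists). The difference is in how the construction is cashed out. The paper works directly with the incidence matrix: it observes that $H(\Upsilon_0)$ is an $n\times n$ submatrix of $H(\Theta,w)$ and computes $\det H(\Upsilon_0)=\det H(\overrightarrow{C},w)\cdot\prod_{p}\sqrt{w(\vec e_p)}\neq 0$ using the block-triangular shape from Lemma~\ref{Lemma2}, so $\mathrm{rank}\,H(\Theta,w)=n$ immediately. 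You instead route through the Laplacian, invoking Theorem~\ref{Theroem3} to write $\det L(\Theta,w)$ as a sum over all spanning $1$-forests, note that every summand is non-negative (a pleasant observation the paper does not make explicit), and then use $\Upsilon_0$ to produce one strictly positive term; the rank of $H$ then follows from $L=HH^T$. Your route is slightly less direct---it relies on the full matrix-tree expansion rather than a single submatrix determinant---but it yields the sharper conclusion $\det L(\Theta,w)>0$ rather than merely $\neq 0$, and it makes the role of the real-part condition $Re(\varphi(\overrightarrow{C}))<1$ more transparent.
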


\begin{proof}
Consider an unbalanced cycle $\overrightarrow{C}=v_1\vec{e}_1v_2\vec{e}_2 \cdots \vec{e}_{l-1}v_l\vec{e}_lv_1$ in $(\Theta,w)$ with length $l$, where $3 \leq l \leq n$ in $(\Theta,w)$. Since $\overrightarrow{C}$ is unbalanced, so that $\varphi(\overrightarrow{C}) \neq 1$. Choosing a subgraph $\Upsilon$, which is a spanning $1$-tree in $(\Theta,w)$, contains a unique cycle $\overrightarrow{C}$. Since $(\Theta,w)$ connected, so such type of selection is possible. Let's arrange the vertices and edges in a suitable way and let denote the remaining edges in $\Upsilon$ by $\vec{e}_{l+1},\vec{e}_{l+2}, \cdots , \vec{e}_{n}$, so that for $j<k$, the edge $\vec{e}=\overrightarrow{v_jv_k}$ has tail $\mathcal{T}(\vec{e})=v_j$ and head $\mathcal{H}(\vec{e})=v_k$.\\
Since $H(\Upsilon)$ is a $n \times n$ submatrix, obtained by $\Upsilon$, where the edges of $\Upsilon$ are corresponding to the columns in $H(\Upsilon)$. Then the determinant of $H(\Upsilon)$ is given by

\begin{eqnarray*}
  \det(H(\Upsilon) &=& \det(H(\overrightarrow{C},w)). \prod_{p=l+1}^{n}\sqrt{w(\vec{e_p})} .
\end{eqnarray*}

Clearly, $\prod_{p=l+1}^{n}\sqrt{w(\vec{e_p})} \neq 0$. Since weights are always positive real numbers and can't be zero. In the other hand, $\det(H(\overrightarrow{C},w))=\sqrt{w(\overrightarrow{C})}\bigg(1-(\varphi(\overrightarrow{C})^{-1}\bigg)$. The only possibility for the determinant of  $H(\overrightarrow{C},w)$ to be zero, when $(\varphi(\overrightarrow{C}))^{-1}=1$. Since $\overrightarrow{C}$ is unbalanced, so that $(\varphi(\overrightarrow{C}))^{-1} \neq 1$. Thus, $\det(H(\Upsilon) \neq 0$, and hence $\mathrm{rank}(H(\Upsilon))=n$, which implies $\mathrm{rank}(H(\Theta,w)=n$.\\

By the similar way, we can follow for $H^T(\Theta,w)$.

\begin{eqnarray*}
  \det(H^T(\Theta,w) &=& \sqrt{w(\overrightarrow{C})}\bigg(1-\varphi(\overrightarrow{C}\bigg). \prod_{p=l+1}^{n}\sqrt{w(\vec{e_p})} \neq 0
\end{eqnarray*}
this implies, $\mathrm{rank}(H^T(\Theta,w)=n$.
\end{proof}

\begin{theorem}\label{Theorem5}
Let $(\Theta,w)=(\Sigma, \varphi, w)$ be a connected balanced weighted $\mathbb{T}$-gain graph of order $n$, then $\text{rank}(H(\Theta,w))=n-1$ and $\mathrm{rank}(H^T(\Theta,w))=n-1$.
\end{theorem}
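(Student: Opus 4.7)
The plan is to reduce the balanced case to the classical ungained, undirected setting by a diagonal-unitary switching transformation. First I would invoke the standard characterization of balance: $(\Theta,w)$ is balanced if and only if there exists a switching function $\eta:V(\Sigma)\to\mathbb{T}$ such that $\varphi(\vec{e}_{jk})=\eta(v_j)^{-1}\eta(v_k)$ on every oriented edge $\vec{e}_{jk}=\overrightarrow{v_jv_k}$.

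Next I would introduce two diagonal unitary matrices: the vertex-indexed $U=\mathrm{diag}(\eta(v_1),\ldots,\eta(v_n))$, and an edge-indexed $D$ with $D_{\vec{e},\vec{e}}=\eta(\mathcal{\check{T}}(\vec{e}))$. A direct column-by-column computation, using the balance identity together with $\varphi(\vec{e})^{-1}=\overline{\varphi(\vec{e})}$, should yield
\[
U\,H(\Theta,w)\,D^{-1} \;=\; H_{0},
\]
where $H_{0}$ is the classical real oriented incidence matrix of the underlying positively weighted graph $(\Sigma,w)$ (entry $\sqrt{w(e)}$ at the tail and $-\sqrt{w(e)}$ at the head). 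Because $U$ and $D$ are both invertible, this identity forces $\mathrm{rank}(H(\Theta,w))=\mathrm{rank}(H_{0})$.

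To finish, $H_{0}$ is the oriented incidence matrix of a connected graph on $n$ vertices with strictly positive edge weights; the classical argument that $\ker(H_{0}^{T})$ is spanned by the all-ones vector (using connectedness of $\Sigma$) gives $\mathrm{rank}(H_{0})=n-1$. Therefore $\mathrm{rank}(H(\Theta,w))=n-1$, and since the rank of any matrix equals the rank of its transpose, $\mathrm{rank}(H^{T}(\Theta,w))=n-1$ follows at once.

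The main obstacle is the switching calculation itself: one must line up the sign and inverse conventions of Definition \ref{def7} (the head entry is $-\varphi(\vec{e})^{-1}\sqrt{w(e)}$ and $\varphi(\vec{e}_{jk})=\varphi(\vec{e}_{kj})^{-1}$) with the form $\varphi(\vec{e}_{jk})=\eta(v_j)^{-1}\eta(v_k)$ of the balance identity. Once those are aligned, the cancellation that turns the head entry into $-\eta(v_j)\sqrt{w(e)}$ after left-multiplication by $U$ is immediate, and factoring $\eta(\mathcal{\check{T}}(\vec{e}))$ out of each column via $D^{-1}$ recovers $H_{0}$ exactly. Note that the alternative route via Theorem \ref{Theroem3} only yields $\det(L(\Theta,w))=0$, i.e.\ $\mathrm{rank}(L(\Theta,w))<n$, which is insufficient to conclude $\mathrm{rank}(H(\Theta,w))=n-1$ because $H^{T}$ is the transpose (not the conjugate transpose) of $H$; this is why the switching argument is the natural route.
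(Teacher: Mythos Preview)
Your proof is correct and takes a genuinely different route from the paper's. The paper's argument (given only as a one-line sketch) is to pick a spanning tree of $\Sigma$ and imitate the computation of Theorem~\ref{Theorem4} and Lemma~\ref{Lemma2}: the $n\times(n-1)$ submatrix of $H(\Theta,w)$ indexed by the tree edges can be arranged so that deleting one row leaves an $(n-1)\times(n-1)$ triangular matrix with nonzero diagonal entries $\sqrt{w(e_i)}$, yielding $\mathrm{rank}(H)\geq n-1$; the matching upper bound comes from balance forcing every factor $1-\varphi(\overrightarrow{C})^{-1}$ in the formulas of Lemma~\ref{Lemma2} and Lemma~\ref{Lemma3} to vanish, so that every $n\times n$ minor of $H$ is zero. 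Your switching argument is more conceptual: a single diagonal conjugation collapses the problem to the classical oriented incidence matrix of a connected positively weighted graph, where $\mathrm{rank}=n-1$ is textbook. The paper's approach has the advantage of staying entirely within the determinant machinery already developed in Section~2; yours is shorter and avoids any minor-by-minor computation, at the cost of importing the potential-function characterization of balance.

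One small correction to your closing remark: despite the sentence before Theorem~\ref{Theorem1} defining $H^{T}$ as the plain transpose, the computations in the paper (see the matrix displayed for $H^{T}$ in the proof of Theorem~\ref{Theorem2}, and the off-diagonal calculation in the proof of Theorem~\ref{Theorem1}) in fact use the conjugate transpose, so in practice $L=HH^{*}$ and $\mathrm{rank}(L)=\mathrm{rank}(H)$. Hence Theorem~\ref{Theroem3} \emph{does} deliver the upper bound $\mathrm{rank}(H)\leq n-1$, and only the lower bound would remain to be supplied. This does not affect your main argument, which is self-contained and correct.
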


\begin{proof}
Since $(\Theta,w)$ is connected, so it is possible to consider a spanning tree in $(\Theta,w)$ of order $n-1$, and then follow the same proof technique adopted in the above theorem.
\end{proof}

\begin{theorem}\label{Theorem6}
Let $(\Theta,w)=(\Sigma, \varphi, w)$ be a connected weighted $\mathbb{T}$-gain graph, then $(\Theta,w)$ is balanced if and only if $\det(L(\Theta,w))=0$.
\end{theorem}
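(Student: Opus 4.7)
The plan is to exploit the spanning $1$-forest determinant formula of Theorem \ref{Theroem3} together with a positivity argument. The crucial observation is that every cycle gain $\varphi(\overrightarrow{C_q})$ lies on the unit circle $\mathbb{T}$, so $\mathrm{Re}(\varphi(\overrightarrow{C_q})) \leq 1$ with equality if and only if $\varphi(\overrightarrow{C_q}) = 1$. Combined with the strict positivity of the weights, this forces every summand
$$\prod_{\vec{e}\in \Upsilon} w(\vec{e}) \, \prod_{q \in \Upsilon} 2\bigl(1-\mathrm{Re}(\varphi(\overrightarrow{C_q}))\bigr)$$
in Theorem \ref{Theroem3} to be a non-negative real number. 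Hence $\det(L(\Theta,w))=0$ if and only if every single summand vanishes.

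The forward implication is immediate from this: if $(\Theta,w)$ is balanced, then every cycle of $(\Theta,w)$ has gain $1$, so every cycle appearing in any spanning $1$-forest $\Upsilon$ contributes a factor $2(1-\mathrm{Re}(1)) = 0$, and the entire sum of Theorem \ref{Theroem3} vanishes.

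For the converse I will argue by contrapositive. Assuming the existence of an unbalanced cycle $C$ in $(\Theta,w)$, I plan to exhibit a spanning $1$-forest $\Upsilon^{\ast}$ of $\Sigma$ whose unique cycle is $C$. This is possible because $\Sigma$ is connected: contract $C$ to a single vertex, choose a spanning tree of the (still connected) contracted graph, then lift this tree back to $\Sigma$ and re-insert the edges of $C$; the resulting spanning subgraph has exactly $n$ edges and a unique cycle $C$, so it is a spanning $1$-tree, hence a spanning $1$-forest. The summand of Theorem \ref{Theroem3} corresponding to $\Upsilon^{\ast}$ reduces to $\prod_{\vec{e}\in \Upsilon^{\ast}} w(\vec{e}) \cdot 2(1-\mathrm{Re}(\varphi(C)))$, which is strictly positive because $C$ is unbalanced (so $\mathrm{Re}(\varphi(C)) < 1$) and the weights are positive. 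Since every other summand is non-negative, the total sum is strictly positive, contradicting $\det(L(\Theta,w)) = 0$.

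The main obstacle is the constructive step in the converse, namely producing a spanning $1$-tree that realises a prescribed unbalanced cycle; this is a mild but essential graph-theoretic point relying on the connectivity of $\Sigma$. Everything else reduces to unpacking Theorem \ref{Theroem3} and the elementary inequality $\mathrm{Re}(z) \leq 1$ for $z \in \mathbb{T}$, with equality iff $z=1$.
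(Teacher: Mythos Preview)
Your argument is correct. The forward direction coincides with the paper's: both feed balance into Theorem~\ref{Theroem3} and watch every summand vanish. For the converse, however, the paper takes a different route. Rather than invoking Theorem~\ref{Theroem3} again, it appeals to Theorem~\ref{Theorem4}: if $(\Theta,w)$ is unbalanced then $\mathrm{rank}(H(\Theta,w))=n$, whence $L=HH^{T}$ has full rank and $\det(L)\neq 0$. Interestingly, the proof of Theorem~\ref{Theorem4} itself hinges on exactly the same graph-theoretic construction you carry out---extending an unbalanced cycle to a spanning $1$-tree---so at bottom the two arguments rest on the same combinatorial fact. What differs is the packaging: the paper detours through the incidence-matrix rank, while you stay inside the determinant expansion and use the extra observation that every summand in Theorem~\ref{Theroem3} is non-negative (since $w>0$ and $\mathrm{Re}(z)\le 1$ on $\mathbb{T}$). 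Your route is slightly more self-contained, and it yields the bonus that $\det(L(\Theta,w))\ge 0$ in all cases; the paper's route makes the rank dichotomy ($n$ versus $n-1$) explicit up front.
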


\begin{proof}
If $(\Theta,w)$ is balanced, then all the of cycles in $(\Theta,w)$ must satisfy the condition $2-[\varphi(\overrightarrow{C})+(\varphi(\overrightarrow{C}))^{-1}]=0$. Thus, by matrix tree theorem we get $\det(L(\Theta,w))=0$. Conversely, assume on contrary. Let $\det(L(\Theta,w)) \neq 0$. This implies $\mathrm{rank}(L(\Theta,w))=n$ and thus $\mathrm{rank}(H(\Theta,w))=n$, which give us $(\Theta,w)$ is unbalanced.
\end{proof}

\section{$\mathbb{T}$-gain distance Laplacians}
We start this section with the definitions of $\mathbb{T}$-gain distance Laplacian for a connected $\mathbb{T}$-gain graph in term of a $\mathbb{T}$-gain distance incidence matrix according to the Definition \ref{def7}. Since the underlying graph to be considered the complete graph $K_n$, so for every edge $v_jv_k \in E(K_n)$ we a have column in the incidence matrix, while the weight of edges represent the distances such as: $w(v_jv_k)=d(v_j,v_k)$.  Then for arbitrary oriented edges we have the following definition.

\begin{definition}\label{def8}
 (\textbf{A maximum distance incidence matrix $DH^{\max}_{<}(\Theta)$}).\\
 Let $\Theta=(\Sigma,\varphi)$ be a $\mathbb{T}$-gain graph, then the (oriented) {\it maximum distance incidence matrix} of a $\mathbb{T}$-gain graph is defined by $DH^{\max}_{<}(\Theta)=(h_{v_j,v_jv_k})$ where

 $$h_{v_j,v_jv_k}=\left \{ \begin{array}{rcl}
\sqrt{d(v_j,v_k)} & \mbox{if $\mathcal{\check{T}}(v_jv_k)=v_j$}, \\
-(\varphi^{\max}_{<}(v_jv_k))^{-1}\sqrt{d(v_j,v_k)} & \mbox{if $\mathcal{\hat{H}}(v_jv_k)=v_j$}, \\
0 & \mbox{otherwise.}
\end{array} \right.$$
\end{definition}

\begin{definition}\label{def9}
 (\textbf{A minimum distance incidence matrix $DH^{\min}_{<}(\Theta)$}).\\
 Let $\Theta=(\Sigma,\varphi)$ be a $\mathbb{T}$-gain graph, then the (oriented) {\it minimum distance incidence matrix} of a $\mathbb{T}$-gain graph is defined by $DH^{\min}_{<}(\Theta)=(h_{v_j,v_jv_k})$ where

 $$h_{v_j,v_jv_k}=\left \{ \begin{array}{rcl}
\sqrt{d(v_j,v_k)} & \mbox{if $\mathcal{\check{T}}(v_jv_k)=v_j$}, \\
-(\varphi^{\min}_{<}(v_jv_k))^{-1}\sqrt{d(v_j,v_k)} & \mbox{if $\mathcal{\hat{H}}(v_jv_k)=v_j$}, \\
0 & \mbox{otherwise.}
\end{array} \right.$$
\end{definition}

Next, the Theorem \ref{Theorem1} gives the formulas for $\mathbb{T}$-gain distance Laplacians.

\begin{theorem}\label{Theorem7}
For a connected $\mathbb{T}$-gain graph $\Theta=(\Sigma,\varphi)$,
\begin{eqnarray*}
  DL^{\max}_{<} &=& DH^{\max}_{<} (DH^{\max}_{<})^{T},\\
  DL^{\min}_{<} &=& DH^{\min}_{<} (DH^{\min}_{<})^{T},\\
  DL(\Theta)    &=&  DH(\Theta) (DH(\Theta))^T.
\end{eqnarray*}
\end{theorem}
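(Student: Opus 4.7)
The plan is to recognize Theorem \ref{Theorem7} as a direct corollary of Theorem \ref{Theorem1} applied to a particular weighted $\mathbb{T}$-gain graph built from $\Theta$. Specifically, given a connected $\mathbb{T}$-gain graph $\Theta=(\Sigma,\varphi)$ with vertex ordering $<$, I would consider the complete graph $K_n$ on $V(\Sigma)$ equipped with the weight function $w(v_jv_k)=d(v_j,v_k)$ (the distance in $\Sigma$) and the gain function $\varphi^{\max}_{<}(v_j,v_k)$ coming from Definition \ref{def2}. Call this auxiliary weighted $\mathbb{T}$-gain graph $\widetilde{\Theta}^{\max}$; an analogous construction with $\varphi^{\min}_{<}$ gives $\widetilde{\Theta}^{\min}$.

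The first step is to verify that the data in Theorem \ref{Theorem7} coincide with the data for $\widetilde{\Theta}^{\max}$ (and $\widetilde{\Theta}^{\min}$) used in Theorem \ref{Theorem1}. There are three matchings to check: (i) the adjacency matrix $A(\widetilde{\Theta}^{\max})$ equals $D^{\max}_{<}(\Theta)$, because its $(s,t)$-entry is $\varphi^{\max}_{<}(v_s,v_t)\,w(v_sv_t)=\varphi^{\max}_{<}(v_s,v_t)\,d(v_s,v_t)=d^{\max}_{<}(v_s,v_t)$; (ii) the weighted degree matrix $D(\widetilde{\Theta}^{\max})=\operatorname{diag}\bigl(\sum_{v_k\neq v_j}d(v_j,v_k)\bigr)$ is exactly the transmission matrix $Tr(\Sigma)$; and (iii) the weighted incidence matrix $H(\widetilde{\Theta}^{\max},w)$ of Definition \ref{def7}, when written out, agrees entry-by-entry with $DH^{\max}_{<}(\Theta)$ of Definition \ref{def8}, since $\sqrt{w(v_jv_k)}=\sqrt{d(v_j,v_k)}$ and the entry at the head of an oriented edge carries the same factor $-(\varphi^{\max}_{<}(v_jv_k))^{-1}$.

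Once these identifications are in place, Definition \ref{def6} gives
\[
L(\widetilde{\Theta}^{\max},w)=D(\widetilde{\Theta}^{\max},w)-A(\widetilde{\Theta}^{\max},w)=Tr(\Sigma)-D^{\max}_{<}(\Theta)=DL^{\max}_{<}(\Theta),
\]
and Theorem \ref{Theorem1} then yields $DL^{\max}_{<}=DH^{\max}_{<}(DH^{\max}_{<})^{T}$. The same argument applied to $\widetilde{\Theta}^{\min}$ gives the second identity, and the third (compatible) identity follows by restricting to the case $DL^{\max}_{<}=DL^{\min}_{<}=:DL(\Theta)$, in which the two incidence matrices coincide and can be denoted $DH(\Theta)$.

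There is no substantial obstacle here; the argument is bookkeeping: confirming that the auxiliary complete weighted $\mathbb{T}$-gain graph captures exactly the transmission/gain-distance data. The only subtlety worth flagging is the orientation convention, namely that the oriented edges chosen on $K_n$ must be used consistently in $DH^{\max}_{<}$, $D^{\max}_{<}$, and $\varphi^{\max}_{<}$, so that the Hermitian symmetry $\varphi^{\max}_{<}(v_k,v_j)=\overline{\varphi^{\max}_{<}(v_j,v_k)}$ makes the identification of entries on both sides unambiguous.
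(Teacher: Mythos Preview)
Your proposal is correct and is exactly the paper's approach: the paper sets up the complete graph $K_n$ with weights $w(v_jv_k)=d(v_j,v_k)$ and gains $\varphi^{\max}_{<}$ (resp.\ $\varphi^{\min}_{<}$) in the paragraph introducing Definitions~\ref{def8}--\ref{def9}, and then states Theorem~\ref{Theorem7} without further proof, simply noting that Theorem~\ref{Theorem1} gives the formulas. Your write-up merely makes explicit the three identifications (adjacency $\leftrightarrow$ gain distance matrix, weighted degree $\leftrightarrow$ transmission, weighted incidence $\leftrightarrow$ distance incidence) that the paper leaves implicit.
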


Next, we find expressions for the determinants, which are based on Theorem \ref{Theroem3}. Let $\Upsilon$ be the subgraph of $K^{D^{\max}_<}$, then we have the following expression for the determinant.

\begin{theorem}\label{Theroem8}
Let $\Theta=(\Sigma, \varphi)$ be a connected $\mathbb{T}$-gain graph, then

 $$\det(DL^{\max}_{<}(\Theta)=\sum_{\Upsilon \in K^{D^{\max}_<}}\prod_{\vec{e}\in \Upsilon}w(\vec{e})~{\prod}_{\overrightarrow{C} \in \Upsilon}\bigg(2(1-Re(\varphi(\overrightarrow{C}))\bigg),$$
 wit the similar formulas for $\det(DL^{\min}_{<}(\Theta))$ and $\det(DL(\Theta))$, where the summation taken over all essential spanning $1$-forest $\Upsilon$ of $K^{D^{\max}_<}(\Theta)$, $K^{D^{\min}_<}(\Theta)$ and $K^{D}(\Theta)$, respectively.
\end{theorem}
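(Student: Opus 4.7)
The proof is a direct reduction to Theorem~\ref{Theroem3}. The plan is to interpret $DL^{\max}_{<}(\Theta)$ as the weighted Laplacian of an auxiliary complete weighted $\mathbb{T}$-gain graph and then invoke the machinery of Section~2 verbatim.

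First, on the vertex set $V(\Sigma)$ I would take the complete $\mathbb{T}$-gain graph $K^{D^{\max}_<}(\Theta)$ of Definition~\ref{def4c} and assign to every edge $v_jv_k$ the positive weight $w(v_jv_k)=d(v_j,v_k)$ together with the gain $\varphi^{\max}_{<}(v_j,v_k)$. A direct check shows that the resulting weighted degree at $v_j$ equals $\sum_{k\ne j}d(v_j,v_k)=tr(v_j)$, matching the diagonal entry of $Tr(\Sigma)$, while the $(j,k)$-entry of the weighted adjacency matrix is $\varphi^{\max}_{<}(v_j,v_k)\,d(v_j,v_k)=d^{\max}_{<}(v_j,v_k)$. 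Hence
$$DL^{\max}_{<}(\Theta)=Tr(\Sigma)-D^{\max}_{<}(\Theta)=L\bigl(K^{D^{\max}_<}(\Theta),w\bigr).$$

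Second, with the same edge-orientation and weighting, Definition~\ref{def8} coincides with Definition~\ref{def7} applied to this weighted $\mathbb{T}$-gain graph; that is, $DH^{\max}_{<}(\Theta)$ is literally the oriented weighted incidence matrix of $(K^{D^{\max}_<}(\Theta),w)$. Consequently Theorem~\ref{Theorem7} is just the instance of Theorem~\ref{Theorem1} for this auxiliary graph and needs no further verification.

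Third, I would invoke Theorem~\ref{Theroem3} for $(K^{D^{\max}_<}(\Theta),w)$. The Binet--Cauchy expansion writes $\det\bigl(DH^{\max}_{<}(DH^{\max}_{<})^T\bigr)$ as a sum over $n$-edge spanning subgraphs of $K^{D^{\max}_<}(\Theta)$; Lemma~\ref{Lemma4} eliminates every summand whose subgraph is not a spanning $1$-forest; and Lemma~\ref{Lemma3} evaluates each surviving determinant as $\prod_{\vec{e}\in\Upsilon}w(\vec{e})\prod_{\overrightarrow{C}\in\Upsilon}\bigl(2(1-Re(\varphi(\overrightarrow{C})))\bigr)$, which is the stated identity. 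The companion formulas for $\det(DL^{\min}_{<}(\Theta))$ and $\det(DL(\Theta))$ follow by repeating the argument with $\varphi^{\min}_{<}$, respectively the compatible gain of Definition~\ref{def4b}, in place of $\varphi^{\max}_{<}$.

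The only point genuinely requiring care is the bookkeeping in Steps one and two: one must check that the orientation and sign conventions chosen in Definitions~\ref{def8} and~\ref{def9} match those of Definition~\ref{def7} for the auxiliary complete graph, so that Theorems~\ref{Theorem1} and~\ref{Theroem3} transfer without modification. Once that matching is recorded, the result drops out of Section~2 with no additional combinatorial work.
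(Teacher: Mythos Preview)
Your proposal is correct and follows precisely the route the paper intends: the paper does not give an explicit proof of Theorem~\ref{Theroem8} but introduces it with the remark that the determinant expressions ``are based on Theorem~\ref{Theroem3}'' and that $\Upsilon$ ranges over subgraphs of $K^{D^{\max}_<}$, i.e., it tacitly identifies $DL^{\max}_{<}(\Theta)$ with $L(K^{D^{\max}_<}(\Theta),w)$ for $w=d$ and then invokes the general weighted result. Your write-up simply makes this identification explicit and records the bookkeeping, which is exactly what is needed.
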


The following is the ordering independent theorem proved by Samanta and Kanan in \cite{Samanta2022}.
\begin{theorem}\label{Theroem9}
\cite{Samanta2022} Let $\Theta=(\Sigma,\varphi)$ be a complex unit gain graph. Then the following are equivalent.\\
(i) $K^{D}(\Theta)$ is well defined.\\
(ii) $K^{D^{\max}(\Theta)}=K^{D^{\min}(\Theta)}=K^{D^{\max}_<}(\Theta)=K^{D^{\min}_<}(\Theta)=K^{D}(\Theta)$ is balanced.\\
(iii) ${D^{\max}_<}={D^{\min}_<}$, for some vertex ordering $(V(\Sigma),<)$.
\end{theorem}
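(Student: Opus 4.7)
The plan is to prove the equivalences along the cycle (i) $\Rightarrow$ (iii) $\Rightarrow$ (ii) $\Rightarrow$ (i), reducing all three implications to the single structural property ``every two vertices are joined by shortest paths carrying a common gain.''

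\textbf{From (i) to (iii).} This is essentially tautological: by Definition~\ref{def4b}, the matrix $D(\Theta)$, and therefore $K^{D}(\Theta)$, is well defined exactly when $D^{\max}_{<}(\Theta)=D^{\min}_{<}(\Theta)$ holds for the standard ordering, which is a special case of (iii).

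\textbf{From (iii) to (ii).} Assume $D^{\max}_{<}=D^{\min}_{<}$ for some ordering $<$. For every pair $y<z$, the characterization (a)--(b) in Definition~\ref{def2} then yields $\max_{yPz\in\mathcal{P}(y,z)}\varphi(yPz)=\min_{yPz\in\mathcal{P}(y,z)}\varphi(yPz)$, so all shortest $y$--$z$ paths share a single gain $\alpha_{yz}\in\mathbb{T}$. This is an intrinsic property of $\Theta$ independent of any vertex ordering, hence $\varphi^{\max}_{<'}(y,z)=\varphi^{\min}_{<'}(y,z)=\alpha_{yz}$ for every ordering $<'$. Consequently, $D^{\max}(\Theta)$, $D^{\min}(\Theta)$, $D^{\max}_{<}(\Theta)$, $D^{\min}_{<}(\Theta)$, and $D(\Theta)$ all coincide, so the five complete gain graphs in (ii) are well defined and equal. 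For the balance assertion, I would first treat a triangle $uvw$ in $K^{D}(\Theta)$ in which $v$ lies on a shortest $u$--$w$ path: concatenating the shortest $u$--$v$ and $v$--$w$ paths gives another shortest $u$--$w$ path, whose gain must equal $\alpha_{uw}$, forcing $\alpha_{uv}\alpha_{vw}\alpha_{wu}=1$. I would then extend this to arbitrary triangles and, by triangulating, to arbitrary cycles in $K^{D}(\Theta)$, using the common-gain property together with the triangle inequality on distances to telescope the triangle identities.

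\textbf{From (ii) to (i).} This is immediate, since (ii) explicitly asserts that $K^{D}(\Theta)$ is one of the well-defined coinciding graphs.

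The hard part will be the balance step inside (iii) $\Rightarrow$ (ii). The naive approach of lifting a cycle in $K^{D}(\Theta)$ to a closed walk in $\Sigma$ by concatenating shortest paths does not by itself produce a walk of trivial gain; one really needs to exploit that all shortest paths between a fixed pair agree, together with a careful reduction through intermediate vertices lying on shortest paths, to collapse the cycle gain into a product of triangle gains that each equal $1$ by the argument above. Organizing this combinatorial reduction cleanly, so that every triangle that appears is aligned with some shortest path, is where the bulk of the work lies.
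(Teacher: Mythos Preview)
The paper does not supply its own proof of this theorem: it is quoted verbatim from \cite{Samanta2022} and used as a black box, so there is no in-paper argument to compare your proposal against.

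On the substance of your sketch: the equivalence (i)$\Leftrightarrow$(iii) and the ``all five complete gain graphs coincide'' portion of (ii) are handled correctly. The genuine gap is the balance claim. Your plan is to reduce every triangle of $K^{D}(\Theta)$ to one in which some vertex lies on a shortest path between the other two, and then telescope. But that reduction cannot be carried out in general: take $\Theta=K_3$ with, say, $\varphi(e_{12})=\varphi(e_{23})=1$ and $\varphi(e_{13})=i$. Every pair of vertices is joined by a unique shortest path (the edge itself), so $D^{\max}_{<}=D^{\min}_{<}$ trivially for every ordering and $K^{D}(\Theta)=\Theta$ is perfectly well defined---yet the single triangle has gain $-i\neq 1$, so $K^{D}(\Theta)$ is \emph{not} balanced. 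Thus (i) and (iii) hold while the ``is balanced'' clause of (ii) fails.

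This means the obstacle you flagged as ``the hard part'' is not merely hard; as the statement is transcribed here it is false, and no amount of careful triangulation will rescue it. Before investing further effort you should check the original formulation in \cite{Samanta2022}: most likely the intended content of (ii) is only the coincidence of the five complete gain graphs (which you have essentially proved), with balance entering separately via what appears here as Theorem~\ref{Theroem10}.
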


Now, we characterize the singularity in $\mathbb{T}$-gain distance Laplacian matrices. In \cite{Samanta2022}, Samanta and Kanan proved the following result which characterize balance in $\mathbb{T}$-gain graphs using gain distances.

\begin{theorem}\label{Theroem10}
\cite{Samanta2022} Let $\Theta=(\Sigma,\varphi)$ be a complex unit gain graph with the vertex ordering $(V(\Sigma),<)$. Then the following are equivalent.\\
(i) $\Theta$ is balanced.\\
(ii) $K^{D^{\max}}$ is balanced.\\
(iii) $K^{D^{\min}}$ is balanced.\\
(iv) $D^{\max}(\Theta)=D^{\min}(\Theta)$ and the associated complex unit gain graph $K^{D}(\Theta)$ is balanced.
\end{theorem}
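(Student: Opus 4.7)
The plan is to prove the chain $(i)\Rightarrow(ii)\Rightarrow(i)$, then note that $(i)\Leftrightarrow(iii)$ by the same argument with $\min$ replacing $\max$, and finally deduce $(i)\Leftrightarrow(iv)$ as a short consequence. The single tool driving everything is the standard switching-function characterization of balance: $\Theta$ is balanced if and only if there exists $\zeta:V(\Sigma)\to\mathbb{T}$ with $\varphi(\vec{e}_{jk})=\zeta(v_j)\overline{\zeta(v_k)}$ for every oriented edge. This is a well-known fact about $\mathbb{T}$-gain graphs and is what makes the gain of any path depend only on its endpoints.

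For $(i)\Rightarrow(ii)$, I would fix such a $\zeta$ and telescope along an arbitrary $y$--$z$ path $P=y\,y_1\cdots y_{s-1}\,z$ to obtain
$$\varphi(P)=\zeta(y)\overline{\zeta(y_1)}\cdot \zeta(y_1)\overline{\zeta(y_2)}\cdots \zeta(y_{s-1})\overline{\zeta(z)}=\zeta(y)\overline{\zeta(z)}.$$
In particular \emph{every} shortest $y$--$z$ path has the same gain, so the tie-breaking clauses in Definition \ref{def2} are vacuous and $\varphi^{\max}_{<}(y,z)=\zeta(y)\overline{\zeta(z)}$ independently of the ordering. Any cycle $v_{i_1}v_{i_2}\cdots v_{i_k}v_{i_1}$ in $K^{D^{\max}_{<}}(\Theta)$ then telescopes to
$$\prod_{j=1}^{k}\zeta(v_{i_j})\overline{\zeta(v_{i_{j+1}})}=1 \qquad (\text{indices mod } k),$$
so $K^{D^{\max}_{<}}(\Theta)$ is balanced.

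For $(ii)\Rightarrow(i)$, I use that for every edge $v_jv_k\in E(\Sigma)$ we have $d(v_j,v_k)=1$ with the edge itself as the unique shortest path, hence $\varphi^{\max}_{<}(v_j,v_k)=\varphi(\vec{e}_{jk})$. Therefore every cycle of $\Sigma$ is also a cycle of $K^{D^{\max}_{<}}(\Theta)$ with the identical gain, and balance of the latter forces balance of the former. The proof of $(i)\Leftrightarrow(iii)$ is word-for-word identical with $\min$ in place of $\max$, since the switching identity kills the max/min distinction equally well.

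Finally, $(i)\Rightarrow(iv)$ is immediate from what has already been done: the telescoping identity forces $\varphi^{\max}_{<}=\varphi^{\min}_{<}=\zeta(\cdot)\overline{\zeta(\cdot)}$ for every ordering, so $D^{\max}(\Theta)=D^{\min}(\Theta)$ in the sense of Definition \ref{def4a}, and $K^{D}(\Theta)=K^{D^{\max}_{<}}(\Theta)$ inherits balance. Conversely, under (iv), $K^{D}(\Theta)=K^{D^{\max}_{<}}(\Theta)$ is balanced, and the already proved implication $(ii)\Rightarrow(i)$ delivers balance of $\Theta$. The only mildly subtle step, and what I regard as the crux, is the first one: verifying that balance of $\Theta$ genuinely collapses the auxiliary gain maps to a single switching-function expression. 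Once that is secured, every remaining implication is a cyclic cancellation or an edge-to-cycle inclusion.
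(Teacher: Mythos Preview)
The paper does not give its own proof of this statement: Theorem~\ref{Theroem10} is simply quoted from \cite{Samanta2022} and used as a black box, so there is no argument here to compare against. Your proposed proof is correct and is essentially the argument one would expect: the switching-function characterization of balance forces every $y$--$z$ path (in particular every shortest one) to carry the same gain $\zeta(y)\overline{\zeta(z)}$, which collapses the $\max$/$\min$ auxiliary gains and makes every cycle in the associated complete gain graph telescope to $1$; conversely, adjacent vertices in $\Sigma$ have the edge itself as their unique shortest path, so $\Theta$ sits inside $K^{D^{\max}}$ as a gain subgraph and inherits balance.

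The only point worth flagging is notational rather than mathematical. The theorem as stated in this paper writes $K^{D^{\max}}$ and $D^{\max}(\Theta)$ without the ordering subscript, which per Definition~\ref{def4a} presupposes ordering independence. Your proof silently handles this in the forward direction (balance of $\Theta$ makes the auxiliary gains ordering-free), and in the reverse direction the hypothesis that $K^{D^{\max}}$ is well-defined is implicit in the statement itself. If you want the argument to be airtight as written, you might add one sentence noting that the implication $(ii)\Rightarrow(i)$ holds already for $K^{D^{\max}_{<}}(\Theta)$ with any fixed ordering, since the edge-as-unique-shortest-path observation is ordering-insensitive.
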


Now we are in the position to characterize singularity.\\

\begin{theorem}\label{Theroem11}
For any connected $\mathbb{T}$-gain $\Theta=(\Sigma,\varphi)$ the following properties are equivalent.\\
(i) The maximum $\mathbb{T}$-gain distance Laplacian determinant $\det(DL^{\max}(\Theta))=0$.\\
(ii) The minimum $\mathbb{T}$-gain distance Laplacian determinant $\det(DL^{\min}(\Theta))=0$.\\
(iii)  $DL^{\max}(\Theta)=DL^{\min}(\Theta)$ and $\det(DL(\Theta))=0$.\\
(iv) $\Theta$ is balanced.\\

Particularly, if $\Theta$ is balanced then the rank of their distance Laplacian matrices is $n-1$, and the rank is $n$, where $\Theta$ is unbalanced.
\end{theorem}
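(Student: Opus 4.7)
The plan is to recognise each gain distance Laplacian as an ordinary weighted $\mathbb{T}$-gain Laplacian of a complete gain graph, and then apply the structural results of Section~2 wholesale. Concretely, by Definition~\ref{def4c} and the construction in Definitions~\ref{def8}--\ref{def9}, the matrix $DL^{\max}_{<}(\Theta)$ is precisely $L(K^{D^{\max}_{<}}(\Theta),w)$, where $K^{D^{\max}_{<}}(\Theta)$ is viewed as a connected $\mathbb{T}$-gain graph on the complete graph $K_n$ with positive weights $w(v_jv_k)=d(v_j,v_k)$; Theorem~\ref{Theorem7} makes this identification precise via $DL^{\max}_{<}=DH^{\max}_{<}(DH^{\max}_{<})^{T}$. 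The analogous identification holds for $DL^{\min}_{<}(\Theta)$ and (when it is well defined) for $DL(\Theta)$. Once this reinterpretation is in hand, everything reduces to combining Theorem~\ref{Theorem6}, Theorem~\ref{Theroem9}, and Theorem~\ref{Theroem10}.

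I would first establish $(i)\Leftrightarrow(iv)$ and $(ii)\Leftrightarrow(iv)$. Applying Theorem~\ref{Theorem6} to the connected weighted $\mathbb{T}$-gain graph $K^{D^{\max}_{<}}(\Theta)$ yields $\det(DL^{\max}_{<}(\Theta))=0$ if and only if $K^{D^{\max}_{<}}(\Theta)$ is balanced. By Theorem~\ref{Theroem10}, $K^{D^{\max}}(\Theta)$ is balanced if and only if $\Theta$ itself is balanced, which gives $(i)\Leftrightarrow(iv)$. The minimum case is symmetric, producing $(ii)\Leftrightarrow(iv)$.

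For $(iii)\Leftrightarrow(iv)$, note first that $DL^{\max}(\Theta)=DL^{\min}(\Theta)$ is equivalent to $D^{\max}(\Theta)=D^{\min}(\Theta)$, since both Laplacians share the same diagonal $Tr(\Sigma)$. For $(iv)\Rightarrow(iii)$, Theorem~\ref{Theroem10}(iv) gives $D^{\max}(\Theta)=D^{\min}(\Theta)$, so $DL^{\max}(\Theta)=DL^{\min}(\Theta)=DL(\Theta)$, while $\det(DL(\Theta))=0$ follows from the already-established $(iv)\Rightarrow(i)$. Conversely, $(iii)\Rightarrow(iv)$ follows by invoking Theorem~\ref{Theroem9}, which tells us that $D^{\max}=D^{\min}$ forces $K^{D}(\Theta)$ to be well defined and balanced, hence $\Theta$ is balanced by Theorem~\ref{Theroem10}. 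For the rank addendum, I would use $\mathrm{rank}(DL^{\max}_{<})=\mathrm{rank}\bigl(DH^{\max}_{<}(DH^{\max}_{<})^{T}\bigr)=\mathrm{rank}(DH^{\max}_{<})$: if $\Theta$ is balanced, Theorem~\ref{Theorem10} makes $K^{D^{\max}_{<}}(\Theta)$ balanced, so Theorem~\ref{Theorem5} gives rank $n-1$; if $\Theta$ is unbalanced, Theorem~\ref{Theorem4} applied to the unbalanced $K^{D^{\max}_{<}}(\Theta)$ gives full rank $n$. The minimum version is identical.

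The main obstacle is the step $(iii)\Rightarrow(iv)$: one must argue that the mere coincidence of two distinct Laplacians, rather than of the underlying gain graphs, already forces the ambient gain graph $\Theta$ to be balanced. This is where Theorem~\ref{Theroem9}, bridging coincidence of $D^{\max}$ and $D^{\min}$ with balance of $K^{D}(\Theta)$, does the real work; the remaining directions are essentially specialisations of the Kirchhoff-type determinant and rank formulas already established in Section~2.
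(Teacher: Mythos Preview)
Your proposal is correct and follows essentially the same route as the paper: identify $DL^{\max}(\Theta)$ and $DL^{\min}(\Theta)$ as the weighted $\mathbb{T}$-gain Laplacians $L(K^{D^{\max}}(\Theta),w)$ and $L(K^{D^{\min}}(\Theta),w)$ with $w(v_jv_k)=d(v_j,v_k)$, then invoke Theorem~\ref{Theorem6} for the singularity characterisation, Theorem~\ref{Theroem10} (together with Theorem~\ref{Theroem9}) for the balance equivalences, and Theorems~\ref{Theorem4}--\ref{Theorem5} for the rank statement. Your treatment of $(iii)\Leftrightarrow(iv)$ is more explicit than the paper's terse appeal to Theorem~\ref{Theroem10}, but the underlying argument is the same.
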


\begin{proof}
Corresponding to the two $\mathbb{T}$-gain complete graphs $K^{D^{\max}}(\Theta)$ and $K^{D^{\min}}(\Theta)$, we define two weighted $\mathbb{T}$-gain complete graphs $(K^{D^{\max}}(\Theta),w)$ and $(K^{D^{\min}}(\Theta),w)$, respectively. Where, we define $w(e_i)=d(v_j,v_k)$ for any edge $e_i=v_jv_k$. Then

$$L(K^{D^{\max}}(\Theta),w)=DL^{\max}(\Theta) ~\mathrm{and}~ L(K^{D^{\min}}(\Theta),w)=DL^{\min}(\Theta).$$

Hence, by Theorem \ref{Theorem6}, $\det(DL^{\max}(\Theta))=0$ if and only if $K^{D^{\max}}(\Theta)$ is balanced, and $\det(DL^{\min}(\Theta))=0$ if and only if $K^{D^{\min}}(\Theta)$ is balanced. Thus, by Theorem \ref{Theroem10} we obtain the required results.\\

The required expression for the rank of those matrices can be followed by Theorem \ref{Theorem4}, and Theorem \ref{Theorem5}.

\end{proof}

Finally, we provide with a spectral characterization of balance, while using the $\mathbb{T}$-gain distance Lapalcian spectrum.\\

A {\it switching function} for a $\mathbb{T}$-gain graph $\Theta=(\Sigma,\varphi)$ is a map $\xi:V(\Sigma)\rightarrow \mathbb{T}$. By {\it switching} a non-empty $\mathbb{T}$-gain graph $\Theta=(\Sigma,\varphi)$ means replacing $\varphi$ by $\varphi^{\xi}$, where $\varphi^{\xi}(e_{jk})=\xi(v_j)^{-1}\varphi(e_{jk})\xi(v_k)$ and $\Theta^{\xi}=(\Sigma,\varphi^{\xi})$ denote a $\mathbb{T}$-gain graph after switching.

\begin{lemma}\label{Lemma5}
Switching a $\mathbb{T}$-gain graph $\Theta=(\Sigma,\varphi)$ does not change the set of compatible pairs of vertices.
\end{lemma}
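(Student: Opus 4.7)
The plan is to reduce the statement to a single telescoping computation about how a gain along a path transforms under switching, and then observe that compatibility of the pair $(y,z)$ depends only on whether all shortest $y$-$z$ paths have a common gain. Here, a pair $(y,z)$ being compatible means (in the sense of Definition \ref{def4b} applied entrywise) that $d^{\max}_{<}(y,z)=d^{\min}_{<}(y,z)$; equivalently, since both are taken over $\mathcal{P}(y,z)$, that every shortest path from $y$ to $z$ has the same $\varphi$-gain.

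First, I would record the telescoping identity. Let $P\colon y=v_{i_0}\!-\!v_{i_1}\!-\!\cdots\!-\!v_{i_\ell}=z$ be any oriented path. Then
\[
\varphi^{\xi}(P)=\prod_{j=1}^{\ell}\xi(v_{i_{j-1}})^{-1}\varphi(e_{i_{j-1}i_j})\xi(v_{i_j})=\xi(y)^{-1}\,\varphi(P)\,\xi(z),
\]
because all intermediate factors cancel in pairs (and $\mathbb{T}$ is abelian, so the rearrangement is harmless). The crucial point is that the multiplicative correction $\xi(y)^{-1}\xi(z)$ depends only on the endpoints $y$ and $z$, not on the particular path $P$.

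Next, I would note that switching does not alter the underlying graph $\Sigma$ and hence leaves the distance $d(y,z)$ and the shortest-path set $\mathcal{P}(y,z)$ unchanged. Therefore, for any two shortest paths $P_1,P_2\in\mathcal{P}(y,z)$,
\[
\varphi^{\xi}(P_1)=\varphi^{\xi}(P_2)\iff \xi(y)^{-1}\xi(z)\,\varphi(P_1)=\xi(y)^{-1}\xi(z)\,\varphi(P_2)\iff \varphi(P_1)=\varphi(P_2).
\]
Consequently, all shortest $y$-$z$ paths share a common gain in $\Theta$ if and only if they share a common gain in $\Theta^{\xi}$. By the characterization in the first paragraph, this is exactly the statement that $(y,z)$ is compatible in $\Theta$ iff it is compatible in $\Theta^{\xi}$, proving the lemma.

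The only mild obstacle is being explicit about what a ``compatible pair'' means in a setting where the paper has so far only declared compatibility of the whole graph; once that pointwise interpretation is fixed, the argument is the one-line telescoping identity together with the observation that the common endpoint factor $\xi(y)^{-1}\xi(z)$ cannot distinguish between shortest paths.
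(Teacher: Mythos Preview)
Your proof is correct and follows the same idea as the paper's own proof: switching multiplies the gain of every $y$--$z$ path by the same endpoint factor $\xi(y)^{-1}\xi(z)$, so equality of gains among shortest paths is preserved. The paper's proof is a two-sentence sketch of precisely this observation, whereas you have written out the telescoping identity and the pointwise notion of compatibility explicitly; the substance is identical.
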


\begin{proof}
  For any two arbitrary vertices $x$ and $y$, the switching either change or not the gain of $xy$-paths. Thus, if the vertices $x$ and $y$ are compatible in $\Theta(\Sigma,\varphi)$, then also compatible in $\Theta^{\xi}=(\Sigma,\varphi^{\xi})$, and conversely.
\end{proof}

\begin{theorem}\label{Theroem12}
If $\Theta=(\Sigma,\varphi)$ is switched to $\Theta^{\xi}=(\Sigma,\varphi^{\xi})$ and if $D^{\max}(\Theta)=D^{\min}(\Theta)=D(\Theta)$, then $D^{\max}(\Theta^{\xi})=D^{\min}(\Theta^{\xi})=D(\Theta^{\xi})$ and $D(\Theta)$ is similar to $D(\Theta^{\xi})$.
\end{theorem}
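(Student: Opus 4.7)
The backbone of the argument is the telescoping identity for switched gains: for any oriented path $P$ from $y$ to $z$ in $\Sigma$,
$$\varphi^{\xi}(yPz) \;=\; \xi(y)^{-1}\,\varphi(yPz)\,\xi(z),$$
which follows by multiplying the per-edge rule $\varphi^{\xi}(e_{jk})=\xi(v_j)^{-1}\varphi(e_{jk})\xi(v_k)$ along the path and noticing that all interior $\xi$-factors cancel. The crucial feature is that the correction factor $\xi(y)^{-1}\xi(z)$ depends only on the endpoints, not on the path.

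First I would observe that the hypothesis $D^{\max}(\Theta)=D^{\min}(\Theta)=D(\Theta)$ forces $\Theta$ to be balanced: this is precisely Theorem \ref{Theroem10} (iv), together with Definitions \ref{def4a}--\ref{def4b}. So for every pair $y,z\in V(\Sigma)$ every shortest $y$-$z$ path in $\Theta$ carries one and the same gain; call it $\alpha(y,z)=\varphi^{\max}_{<}(y,z)=\varphi^{\min}_{<}(y,z)$. Applying the telescoping identity, every shortest $y$-$z$ path in $\Theta^{\xi}$ then carries the common gain $\xi(y)^{-1}\alpha(y,z)\xi(z)$. Because this value is independent of the chosen shortest path, the lexicographic max and min in Definition \ref{def2} collapse to the same number for every vertex ordering, so $D^{\max}_{<}(\Theta^{\xi})=D^{\min}_{<}(\Theta^{\xi})$ and this common matrix is ordering independent; hence $D^{\max}(\Theta^{\xi})=D^{\min}(\Theta^{\xi})=:D(\Theta^{\xi})$ is well defined. (Lemma \ref{Lemma5} gives an orthogonal check: compatible pairs are preserved, so full compatibility is preserved as well.)

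Second, I would exhibit the similarity explicitly via the diagonal unitary $U=\mathrm{diag}(\xi(v_1),\ldots,\xi(v_n))$. Because $|\xi(v_i)|=1$, $U^{-1}=U^{*}$. From the previous paragraph, the $(j,k)$-entry of $D(\Theta^{\xi})$ is
$$\xi(v_j)^{-1}\alpha(v_j,v_k)\xi(v_k)\cdot d(v_j,v_k) \;=\; \xi(v_j)^{-1}\,D(\Theta)_{jk}\,\xi(v_k) \;=\; \bigl(U^{-1}D(\Theta)U\bigr)_{jk},$$
which yields $D(\Theta^{\xi})=U^{-1}D(\Theta)U$, a (unitary) similarity.

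The main obstacle is the careful bookkeeping in the first step: a priori, the auxiliary gains $\varphi^{\max}_{<}$ and $\varphi^{\min}_{<}$ are defined by a lexicographic optimization over all shortest paths, and multiplication by a unit complex scalar $\xi(y)^{-1}\xi(z)$ need not preserve the ordering of real or imaginary parts. What rescues the argument is that the distance compatibility hypothesis, via Theorem \ref{Theroem10}, reduces this potentially multi-valued optimization to a single value, after which the telescoping identity transfers everything cleanly to $\Theta^{\xi}$ and the remaining similarity is pure linear algebra.
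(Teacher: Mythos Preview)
Your proof is correct and follows essentially the paper's route: the paper invokes Lemma~\ref{Lemma5} to carry compatibility across the switching and then writes down the diagonal conjugation $D(\Theta^{\xi})=S^{-1}D(\Theta)S$, while you unpack Lemma~\ref{Lemma5} explicitly via the telescoping identity $\varphi^{\xi}(yPz)=\xi(y)^{-1}\varphi(yPz)\xi(z)$ and perform the same conjugation. Your detour through balance via Theorem~\ref{Theroem10} is harmless but unnecessary---the compatibility hypothesis alone already forces every shortest $y$--$z$ path to carry one common gain (max $=$ min in a total order), which is exactly what the telescoping step needs and is precisely the content of Lemma~\ref{Lemma5}.
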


\begin{proof}
Let $\Theta=(\Sigma,\varphi)$ switched to $\Theta^{\xi}=(\Sigma,\varphi^{\xi})$ and let $D^{\max}(\Theta)=D^{\min}(\Theta)=D(\Theta)$, then by Lemma \ref{Lemma5} we follow that $D^{\max}(\Theta^{\xi})=D^{\min}(\Theta^{\xi})=D(\Theta^{\xi})$. Next, one can easily calculate the switching matrix $S$ such that $S=\mathrm{diag}(\xi(v_1),\xi(v_2), \cdots , \xi(v_n))=S^{-1}$, which implies $D(\Theta^{\xi})=SD(\Theta)S=SD(\Theta)S^{-1}$.
\end{proof}

By the above theorem, it is clear that the spectra of the distances matrices remain the same after switching and also same in the case of distance Laplacian, which we prove now.

\begin{lemma}\label{Lemma6}
Let $\Theta=(\Sigma,\varphi)$ be a $\mathbb{T}$-gain graph and let $\xi$ be a switching function. Then, the distance Laplacian matrices of $\Theta^{\xi}=(\Sigma,\varphi^{\xi})$ are similar to those of $\Theta=(\Sigma,\varphi)$ and hence having same spectrums.
\end{lemma}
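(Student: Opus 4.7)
The plan is to lift the similarity at the distance-matrix level (Theorem~12) to the distance-Laplacian level using the unitary diagonal switching matrix $S = \mathrm{diag}(\xi(v_1), \ldots, \xi(v_n))$, which lies in $U(n)$ because every $\xi(v_j) \in \mathbb{T}$. The pivotal structural observation is that switching alters only the gain function, not the underlying graph $\Sigma$; in particular the shortest-path distances $d(v_j,v_k)$, the transmissions $tr(v_j)$, and hence the diagonal transmission matrix $Tr(\Sigma)$ are invariant under $\varphi \mapsto \varphi^{\xi}$.

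First I would derive the entrywise transformation rule for the two gain distance matrices. For any $v_j$-$v_k$ walk $P = v_j v_{i_1} v_{i_2}\cdots v_k$, the intermediate factors telescope:
\[
\varphi^{\xi}(P) \;=\; \xi(v_j)^{-1}\varphi(v_jv_{i_1})\xi(v_{i_1})\cdot \xi(v_{i_1})^{-1}\varphi(v_{i_1}v_{i_2})\xi(v_{i_2})\cdots \;=\; \xi(v_j)^{-1}\varphi(P)\xi(v_k).
\]
Since the set $\mathcal{P}(v_j,v_k)$ of shortest paths is determined by $\Sigma$ alone and every such path is rescaled by the same fixed unit $\xi(v_j)^{-1}\xi(v_k)$, applying the auxiliary gain rules from Definition~2 yields
\[
d^{\max}_{<}(\Theta^{\xi})(v_j,v_k) = \xi(v_j)^{-1}\, d^{\max}_{<}(\Theta)(v_j,v_k)\, \xi(v_k),
\]
with the analogous identity for $d^{\min}_{<}$. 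In matrix form, these extend the Theorem~12 similarity beyond the compatible case:
\[
D^{\max}_{<}(\Theta^{\xi}) = S^{-1} D^{\max}_{<}(\Theta)\, S, \qquad D^{\min}_{<}(\Theta^{\xi}) = S^{-1} D^{\min}_{<}(\Theta)\, S.
\]

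Finally, because $Tr(\Sigma)$ is diagonal, $S^{-1}\,Tr(\Sigma)\,S = Tr(\Sigma)$, so subtracting the two identities above from $Tr(\Sigma)$ gives
\[
DL^{\max}_{<}(\Theta^{\xi}) = S^{-1} DL^{\max}_{<}(\Theta)\, S, \qquad DL^{\min}_{<}(\Theta^{\xi}) = S^{-1} DL^{\min}_{<}(\Theta)\, S.
\]
Unitary similarity preserves the spectrum, which completes the proof for both Laplacians.

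The step I expect to carry the most weight is verifying the entrywise identity for $D^{\max}_{<}$ and $D^{\min}_{<}$ individually: the lexicographic tiebreaker on $\mathbb{C}$ in Definition~2 is \emph{a priori} sensitive to rotating every candidate path-gain by the common phase $\xi(v_j)^{-1}\xi(v_k)$, so one must check that the argmax (resp.\ argmin) chosen in $\Theta$ corresponds under switching to the argmax (resp.\ argmin) chosen in $\Theta^{\xi}$. I would handle this by arguing at the path level, pairing representatives across the two graphs via the telescoping formula and invoking Lemma~5 to ensure compatibility information is preserved, so that the selected gain in $\Theta^{\xi}$ is exactly $\xi(v_j)^{-1}$ times the selected gain in $\Theta$ times $\xi(v_k)$; once this is in place, the passage from $D$ to $DL$ is purely formal.
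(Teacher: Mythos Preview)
Your overall architecture is exactly the paper's: conjugate by the diagonal switching matrix $S=\mathrm{diag}(\xi(v_1),\dots,\xi(v_n))$, observe that $S$ commutes with $Tr(\Sigma)$ because switching leaves the underlying graph untouched, and read off the similarity for $DL^{\max}$. The paper in fact simply \emph{asserts} $D^{\max}(\Theta^{\xi})=SD^{\max}(\Theta)S^{-1}$ and then computes $S\,DL^{\max}(\Theta)\,S^{-1}=DL^{\max}(\Theta^{\xi})$, leaving the other cases as an exercise; so at the level of strategy you match it step for step and supply more of the telescoping detail than the paper does.

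The concern you isolate at the end, however, is a genuine gap, and Lemma~5 does not close it. Multiplying every shortest-path gain from $v_j$ to $v_k$ by the common unit $c=\xi(v_j)^{-1}\xi(v_k)$ is a rotation of $\mathbb{T}$, and rotations do \emph{not} preserve the lexicographic order on $\mathbb{C}$ used in Definition~2. Concretely, take a $4$-cycle in which the two $v_1$--$v_3$ geodesics have gains $e^{i\pi/4}$ and $e^{-i\pi/4}$; then $\varphi^{\max}_{<}(v_1,v_3)=e^{i\pi/4}$. Switching with $\xi(v_1)=e^{-i\pi/4}$ and $\xi(v_j)=1$ otherwise sends these two gains to $i$ and $1$, so $(\varphi^{\xi})^{\max}_{<}(v_1,v_3)=1$, whereas $c\cdot e^{i\pi/4}=i$. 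Hence the entrywise identity $D^{\max}_{<}(\Theta^{\xi})=S^{-1}D^{\max}_{<}(\Theta)S$ fails at a non-compatible pair, and Lemma~5---which only says the \emph{set} of compatible pairs is switching-invariant---tells you nothing about how the selected representative moves on such pairs. Your argument (and the paper's) goes through cleanly precisely when every pair is compatible, i.e., when $D^{\max}=D^{\min}=D$; that is the hypothesis of Theorem~12 and the only setting in which the paper actually invokes Lemma~6 (namely Theorem~13, where $\Theta$ is balanced). In that regime no tiebreaking ever occurs, the telescoping formula already pins down the unique entry, and the passage from $D$ to $DL$ is, as you say, purely formal.
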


\begin{proof}
Here, we prove for the case $DL^{\max}$, the remaining cases to be left as an exercise. Since $D^{\max}(\Theta^{\xi})=SD^{\max}(\Theta)S^{-1}$, thus we have

\begin{eqnarray*}
 S.DL^{\max}(\Theta)S^{-1} &=& S\bigr(\mathrm{Tr}(\Sigma)-D^{\max}(\Theta)\bigr)S^{-1} \\
   &=& DL^{\max}(\Theta^{\xi}).
\end{eqnarray*}

Thus, $DL^{\max}(\Theta^{\xi})$ is similar to $DL^{\max}(\Theta)$.

\end{proof}

\begin{theorem}\label{Theroem13}
Let $\Theta=(\Sigma,\varphi)$ be a $\mathbb{T}$-gain graph, then $\Theta=(\Sigma,\varphi)$ is balanced if and only if $DL^{\max}(\Theta)=DL^{\min}(\Theta)=DL(\Theta)$ and $DL(\Theta)$ is cospectral to $DL(\Sigma)$, where $DL(\Sigma)$ is the distance Laplacian matrix of the underlying graph $\Sigma$.
\end{theorem}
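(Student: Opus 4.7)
The plan is to treat the biconditional as two separate implications, tying balance to the vanishing of the Laplacian determinant via Theorem \ref{Theroem11} and to the cospectrality assertion via the switching-similarity in Lemma \ref{Lemma6}.

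For the forward direction, suppose $\Theta$ is balanced. The equivalence (iv) $\Rightarrow$ (iii) of Theorem \ref{Theroem11} already gives $DL^{\max}(\Theta)=DL^{\min}(\Theta)=DL(\Theta)$, settling the first half. For the cospectrality statement I would invoke the classical characterization that a $\mathbb{T}$-gain graph is balanced if and only if it is switching equivalent to the identity gain graph $(\Sigma,\mathbf{1})$ in which every edge carries the gain $1$. Fix such a switching function $\xi$, so that $\Theta^{\xi}=(\Sigma,\mathbf{1})$. On this trivial gain graph $D^{\max}(\Theta^{\xi})=D^{\min}(\Theta^{\xi})$ equals the ordinary distance matrix $D(\Sigma)$, and consequently $DL(\Theta^{\xi})=\mathrm{Tr}(\Sigma)-D(\Sigma)=DL(\Sigma)$. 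Applying Lemma \ref{Lemma6} with $S=\mathrm{diag}(\xi(v_1),\dots,\xi(v_n))$ yields $DL(\Theta^{\xi})=S\,DL(\Theta)\,S^{-1}$, so $DL(\Theta)$ and $DL(\Sigma)$ are similar and therefore cospectral.

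For the backward direction, assume $DL^{\max}(\Theta)=DL^{\min}(\Theta)=DL(\Theta)$ and that $DL(\Theta)$ is cospectral to $DL(\Sigma)$. The distance Laplacian $DL(\Sigma)$ of a connected undirected graph has vanishing row sums by construction, so the all-ones vector lies in its kernel and $\det(DL(\Sigma))=0$. Cospectrality then forces $\det(DL(\Theta))=0$, and together with the hypothesis $DL^{\max}(\Theta)=DL^{\min}(\Theta)$ this is exactly condition (iii) of Theorem \ref{Theroem11}; its equivalence with (iv) gives that $\Theta$ is balanced.

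The main obstacle is the forward direction, where one needs to extract from the abstract assumption of balance an explicit switching function $\xi$ that trivializes every edge gain; this is the standard switching characterization of balance for gain graphs and should be cited rather than reproved here. Once that step is in place, Lemma \ref{Lemma6} and Theorem \ref{Theroem11} do all the remaining work, and the converse is a short chain from cospectrality to the vanishing of the determinant to Theorem \ref{Theroem11}(iv).
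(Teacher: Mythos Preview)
Your proposal is correct and follows essentially the same route as the paper: in the forward direction both use the switching characterization of balance to reduce to $(\Sigma,+)$ and then invoke Lemma~\ref{Lemma6} for similarity, and in the converse both pass from cospectrality to $\det(DL(\Theta))=\det(DL(\Sigma))=0$ and then apply Theorem~\ref{Theroem11}. The only cosmetic difference is that the paper cites an external result (Corollary~5.1 of \cite{Samanta2022}) for the equality $DL^{\max}(\Theta)=DL^{\min}(\Theta)$, whereas you obtain it directly from Theorem~\ref{Theroem11}(iv)$\Rightarrow$(iii), and you spell out why $\det(DL(\Sigma))=0$; neither changes the argument.
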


\begin{proof}
Let $\Theta=(\Sigma,\varphi)$ is balanced, then clearly $\Theta=(\Sigma,\varphi)$ can be switched to a positive $\mathbb{T}$-gain graph $\Theta^{\xi}=(\Sigma,+)$. Since $\Theta=(\Sigma,\varphi)$ is balanced, by Corollary 5.1 \cite{Samanta2022} $D(\Theta)$ exist and thus Lemma \ref{Lemma6} implies $DL(\Theta)$ is similar to $DL(\Sigma,+)=DL(\Sigma)$ and hence $DL(\Theta)$ is cospectral to $DL(\Sigma)$.\\

Conversely, $DL^{\max}(\Theta)=DL^{\min}(\Theta)=DL(\Theta)$ and $DL(\Theta)$ is cospectral to $DL(\Sigma)$. Thus, $\det(DL(\Theta))=\det(DL(\Sigma))=0$, and by Theorem \ref{Theroem11}, $\Theta=(\Sigma,\varphi)$ is balanced.
\end{proof}
\ \
\vspace{0.7cm}

\textbf{Disclosure statement}\\
The author's have no conflict of interest.\\

\ \
\vspace{0.7cm}

\textbf{Acknowledgements}\\
I would like to thank Professor Thomas Zaslavsky (Binghamton University, New York) for reviewing the article before submission to the journal. Because he put some valuable comments and suggestions which improved the quality of the paper. The research is partially supported by the Italian National Group for Algebraic and Geometric Structures and their Applications (GNSAGA - INdAM).\\

\ \
\vspace{0.7cm}

\end{document}